\newcommand{\R}{\mathbb{R}}
\newcommand{\N}{\mathbb{N}}
\newcommand{\C}{\mathbb{C}}
\newcommand{\Z}{\mathbb{Z}}
\newcommand{\F}{\mathcal{F}}
\newcommand{\A}{\mathcal{A}}
\newcommand{\V}{\mathcal{V}}
\newcommand{\T}{\mathcal{T}}
\newcommand{\Cc}{\mathcal{C}}
\newcommand{\Hh}{\mathcal{H}}
\newcommand{\U}{\mathcal{U}}
\newcommand{\w}{\omega}
\newcommand{\si}{\sigma}
\newcommand{\la}{\lambda}
\newtheorem{theorem}{Theorem}[section]
\newtheorem{lemma}[theorem]{Lemma}
\newtheorem{proposition}[theorem]{Proposition}
\newtheorem*{general-problem}{GP }
\newtheorem*{Problema A}{Problem A}
\newtheorem*{Problema B}{Problem B}
\newtheorem*{Problema C}{Problem C}
\theoremstyle{remark}
\newtheorem{remark}[theorem]{Remark}
\theoremstyle{definition}
\newtheorem{definition}[theorem]{Definition}
\date{January 2019}
\subjclass[2010]{Primary 94A12; Secondary 47A15, 42C15}
\keywords{sampling, shift invariant spaces, extra-invariance, Paley-Wiener spaces}
\thanks{The research of the authors is partially supported by Grants: CONICET PIP 11220110101018, PICT 2011-436
and UBACyT 20020130100422BA\\Corresponding author Carlos Cabrelli, e-mail: cabrelli@dm.uba.ar,
Telephone/Fax +54 11 4788 1824}
\begin{document}
\title[Subspaces with extra invariance nearest to observed data]{Subspaces with extra invariance nearest to observed
data.}

\author[ C. Cabrelli and C. A. Mosquera]{C. Cabrelli and  C. A. Mosquera}

\address{\textrm{(C. Cabrelli)}
Departamento de Matem\'atica,
Facultad de Ciencias Exac\-tas y Naturales,
Universidad de Buenos Aires, Ciudad Universitaria, Pabell\'on I,
1428 Buenos Aires, Argentina and
IMAS-CONICET, Consejo Nacional de Investigaciones
Cient\'ificas y T\'ecnicas, Argentina}
\email{cabrelli@dm.uba.ar}

\address{\textrm{(C. A. Mosquera)}
Departamento de Matem\'atica,
Facultad de Ciencias Exac\-tas y Naturales,
Universidad de Buenos Aires, Ciudad Universitaria, Pabell\'on I,
1428 Buenos Aires, Argentina and
IMAS-CONICET, Consejo Nacional de Investigaciones
Cient\'ificas y T\'ecnicas, Argentina}
\email{mosquera@dm.uba.ar}

\begin{abstract}

Given an arbitrary finite set of data $\F= \{f_1, \dots, f_m\}\subset L^2(\R^d)$ we prove the existence and show how to construct
a  ``small shift invariant space'' that is  ``closest'' to the data $\F$ over certain class of closed subspaces of $L^2(\R^d)$. The approximating subspace is required to have extra-invariance properties, that is to be invariant under translations by a prefixed additive subgroup of $\R^d$ containing $\Z^d$. This is important for example in situations where we need to deal with jitter error of the data.
Here small means that our solution subspace should be generated by the integer translates of a small number of generators.
An expression for the error in terms of the data is provided and we construct a Parseval frame for the optimal space.

We also consider the problem of approximating  $\F$ from generalized Paley-Wiener spaces of $\R^d,$ that are  generated
by the integer translates of a finite number of functions. These spaces can be seen as finitely generated shift invariant spaces that are $\R^d$ invariant.
 
 In addition, we  characterize these spaces in terms of multi-tile sets of $\R^d$, and show the connections with recent results on Riesz basis of exponentials on bounded sets of $\R^d.$.
\end{abstract}

\maketitle
\section{Introduction}

Let $\Hh$ be a Hilbert space, $\Cc$ a class of closed subspaces of $\Hh$ and $\F= \{f_1, \dots, f_m\}$  a finite set of elements in $\Hh$. 

In this article we study the existence and show how to construct  an optimal subspace  $\mathcal{S}$ in the class $\Cc$ that minimizes the distance to the given data $\F$, in the sense that $\mathcal{S}$ 
minimizes the functional $\mathcal{E}(\F,\mathcal{S})$ over $\Cc.$ The functional is defined as
\begin{equation}\label{error}
\mathcal{E}(\F,\mathcal{S})= \sum_{j=1}^{m} \|f_j-P_{\mathcal{S}} f_j\|^2,
\end{equation}
where $P_{\mathcal{S}}$ denotes the orthogonal projection on the subspace $\mathcal{S}.$

The motivation to find an optimal subspace in $\Cc$ is, that in many situations one wants to choose a model for a certain
class of data. Instead of imposing some conditions on the data to fit some known model, the idea is to
define a large class of subspaces convenient for the application at hand, and find from there the one that
``best fits" the data under study. 

The signals that need to be modelled are ideally  low dimensional but living in a high dimensional space. However, since in applications they are  often corrupted by noise, they become high dimensional, however they are close to a low dimensional subspace, which is the space one seeks.

When the Hilbert space is $L^2(\R^d)$ it is natural to consider as a model for our data the class of
shift invariant spaces (SIS), that is,  closed subspaces of $L^2(\R^d)$ that are invariant under  translations by integers. 
These spaces have been used in approximation theory, 
harmonic analysis, wavelet theory, sampling theory and signal processing (see, e.g., \cite{AG01, Gro01, HW96, Mal89} and references therein). 
Often, in applications, it is assumed  that the signals under study belong to some shift invariant space $V$ generated by the translations of a finite set of functions $\Phi=\{\varphi_1,\cdots,\varphi_m\},$
i.e., $V = S(\Phi)= \overline{\mbox{span}}\{T_k\varphi_i\colon k\in\Z^d, i=1,\cdots, m\}.$ 

The choice of the particular finitely generated shift invariant space typically is not deduced from a set of signals.
For example in  sampling theory,  a  classical assumption is that the signals to be sampled are band-limited, that is, they belong to  
 the shift invariant space $V$  generated by $\varphi(x)= \mbox{sinc}(x).$   However, the band-limited assumption is not  very realistic in many applications.
Thus, it is natural to search for a 
finitely generated shift invariant space that is nearest to a set of some observed data. 


In this paper we study the case when $\Hh = L^2(\R^d).$
 For this case, we restrict the class of approximating subspaces to be shift invariant spaces that have extra-invariance, that is:
If $M$ is a subgroup of $\R^d$  such that $\Z^d \subset M$ we will say that 
$S(\varphi_1,\cdots,\varphi_m)$ is $M$ extra-invariant if
$$
\overline{\mbox{span}}\{T_k\varphi_j\colon j=1,\dots,m,\; k\in\Z^d\}=\overline{\mbox{span}}\{T_\alpha\varphi_j\colon j=1,\dots,m,\; \alpha\in M\}.
$$
Therefore, the space $S(\varphi_1,\cdots,\varphi_m)$   is invariant under translates other than the integers, 
even though it is  generated by the integer translates of a finite set of functions.
Such spaces with extra-invariance 
are important in applications specially in those where the  jitter error is an issue. 

We first consider the case when the subgroup $M$ is a proper subgroup of $\R^d$ that contains $\Z^d$. For that case we obtain one of the main contributions of this paper. We prove  that for any finite set of data $\F= \{f_1, \dots, f_m\}\subset L^2(\R^d),$ for any proper subgroup $M$ containing $\Z^d$ and for any $\ell \in \N$ there always exists a 
 SIS $V$ of length at most $\ell$ with extra invariance $M$ whose distance (in the sense of \eqref{error}) 
 to the data $\F$ is the smallest possible among all the SIS of  length smaller or equal than $\ell$ that are  
 $M$ extra-invariant.
(Here, the length of a SIS is the cardinal of the smallest set of generators).

We construct a solution $V$ and provide a set of generators whose integer translates form a tight frame of $V.$
An expression for the exact value of the error $\mathcal{E}(\F,V)$ between the data and the 
optimal subspace is also obtained using the eigenvalues of some special matrix.

Next, we consider the approximation problem for the class of generalized Paley-Wiener spaces.
Given a measurable set $\Omega \subset \R^d$ (not necessarily bounded), the generalized Paley-Wiener space $PW_{\Omega}$ associate to $\Omega$ is the subspace
of $L^2(\R^d)$ corresponding to the functions whose Fourier transform vanished outside $\Omega.$
A generalized Paley-Wiener space  is always invariant under translations by the whole group $\R^d.$ In particular is a SIS of $L^2(\R^d)$ that not necessarily need to be finitely generated. 
Under the hypothesis that  $PW_{\Omega}$ has a Riesz basis of integer translates, we proved that $PW_{\Omega}$ is finitely generated if and only if $\Omega$ is a multi-tile. (see Proposition \ref{propPW}).
That is  $\Omega$ is a multi-tile  if and only if $PW_{\Omega}$ has extra invariance $M=\R^d$.

We study our approximation problem for those generalized Paley-Wiener spaces. 
We describe for this case how to construct a set of generators and show an interesting connection with recent results about bases of exponentials. This complete all the cases of extra invariance when $M$ is not a proper subgroup.

Finally we consider a similar problem when  the Hilbert space is $\ell^2(\Z^d)$ and $\Cc$ is a conveniently chosen class of  subspaces. We obtain for this case equivalent results
to the ones for $L^2(\R^d).$ 
The approximation problem for the discrete case is related with the continuous case in a very interesting way
that is described in Section \ref{section0}.

\subsection{Previous Work}
Let us now mention some previous related work. 
The problem of approximation of a set of data by shift invariant spaces (without the extra invariance restriction) started in \cite{ACHM07} where the 
authors proved the existence of a minimizer for \eqref{error} over the class of low dimensional subspaces in a Hilbert space 
$\mathcal{H}$ and also over the class of shift invariant spaces in $L^2(\R^d)$.

In \cite{ACM08} the case of multiple subspaces was considered
in the finitely dimensional case. That is, the authors found a union of low dimensional subspaces  that best fits a given set of data in $\R^d$ and provided an algorithm to find it. In 2011, using dimensional reduction techniques, this algorithm was improved (see \cite{AACM11}). 

Further,  in \cite{AT11} the authors found necessary and sufficient  conditions for the existence of optimal subspaces in the general context of Hilbert spaces. However they did not provide a way to construct them.

The first result for approximation of  a finite set of data using shift invariant spaces with extra-invariance constrains appears in  \cite{AKTW12}, 
where the authors consider  principal shift invariant spaces in one variable and they assume that the space has  a generator with orthogonal integer translates, which is a key element in their proof.
So the techniques of this particular case do not apply to our general case.
 

\subsection{Organization of the paper}
The paper is organized as follows. In Section \ref{preliminaries} we set the definitions and results that we need about shift invariant spaces, extra-invariance and 
the approximation problem for the case
of shift invariant spaces in $L^2(\R^d).$ 

The main results of the paper are stated and proved in Sections \ref{section1}, \ref{section2} and 
\ref{section0}. In Section \ref{section1} we present the  $M$ extra-invariant case for shift invariant spaces, in Section \ref{section2}  the case of Paley-Wiener spaces and finally we  consider a discrete case, in Section \ref{section0}.

\section{preliminaries}\label{preliminaries}
We begin with a review of the basic results and definitions that will be needed in subsequent sections. The known results are generally stated without 
proofs, but we provide references where the proofs can be found. Also, we introduce some of our notational conventions. For the definitions of Riesz 
bases and frames in Hilbert spaces we refer the reader to \cite{Chr03, Hei11} and the references therein. 


\subsection{Shift Invariant Spaces}\label{sis}
The structure of these spaces has been deeply analyzed  (see for example \cite{Bow00, dBDVR94, dBDVRI94, Hel64, RS95}).
\begin{definition}
A closed subspace $V\subset  L^2(\R^d)$ is said to be a {\it shift invariant space} if
\[
f\in V\Longrightarrow T_kf\in V, \,\, \textrm{ for any }\,\,k\in\Z^d,
\]
where $T_k$ is the translation by the vector $k\in\Z^d,$ i.e. $T_kf(x)=f(x-k)$.

For any subset $\Phi\subset  L^2(\R^d)$ we define \[
S(\Phi)= \overline{\mbox{span}}\{T_k\varphi\colon \varphi\in\Phi, k\in\Z^d\}\,\,\textrm{ and }\,\,
E(\Phi)= \{T_k\varphi\colon \varphi\in\Phi, k\in\Z^d\}.
\]

We call $S(\Phi)$ the shift invariant space (SIS) generated by $\Phi$.
If $V=S(\Phi)$ for some finite set $\Phi$  we say that $V$ is a {\it finitely generated} SIS, and a
{\it principal} SIS if $V$ can be generated by the integer translates of a single function.
\end{definition}

For a finitely generated SIS $V\subseteq  L^2(\R^d)$ we define the length of $V$ as
\[
\ell(V)= \min\{n\in\N\colon \exists \,\,\varphi_1, \cdots,\varphi_n \in
V \textrm{ with } V=S(\varphi_1, \cdots,\varphi_n)\}.
\]
In addition to the construction of a set of generators of the optimal space for the problems considered in this paper, it will be important to estimate the error of
these approximations. In order to compute these errors we need to consider what is called the Gramian $G_{\Phi}$ for a family of functions $\Phi\subset L^2(\R^d).$

More precisely, given $\Phi= \{\varphi_1,\cdots,\varphi_m\}$ a finite collection of functions in $L^2(\R^d),$ the \emph{Gramian} $G_\Phi$ of $\Phi$ 
is the $m\times m$ matrix of $\Z^d$-periodic functions
\begin{equation} \label{gram}
[G_{\Phi}(\omega)]_{ij}
= \sum_{k \in \Z^d} \widehat{\varphi}_i(\omega+k) \, \overline{\widehat{\varphi}_j(\omega+k)}.
\end{equation}
The Gramian of $\Phi$ is determined a.e. by its values at any measurable set of representatives $\U$ of the quotient $\R^d/\Z^d$ and satisfies $G_{\Phi}(\w)^*= G_{\Phi}(\w)$ for a.e. $\w\in \U$. We will take  $\U=[-1/2,1/2)^d.$

For  a finitely generated SIS $V$, we can express the length of $V$ in terms of the Gramian as follows (see \cite{Bow00, dBDVR94, TW12})
\begin{equation}\label{length-gramian}
\ell(V)=\mathop{essup }_{\w \in \U}\big{[}\text{rk}(G_{\Phi}(\w))\big{]}
\end{equation}
where $\text{rk}(B)$ denotes the rank of a matrix $B$ and $\Phi$ is a generator set for $V$.

One important property of the Gramian is given by the following lemma concerning the measurability of the eigenvalues and the existence of measurable eigenvectors
of a non-negative matrix with measurable entries.
\begin{lemma}[Lemma 2.3.5 of \cite{RS95}]\label{medibilidad}
Let $G(\w)$ be an $m\times m$ self-adjoint matrix of measurable functions defined on a measurable subset $E\subset \R^d$ with eigenvalues $\lambda_1(\w)\ge\dots\ge \lambda_m(\w).$ Then the eigenvalues $\lambda_i,$ $i=1,\dots, m,$ are measurable functions on $E$ and there exists an $m\times m$ matrix of measurable functions $U(\w)$ on $E$ such that $U(\w)U^*(\w)=I$ a.e. $\w\in E$ and such that
\[
G(\w)=U(\w)\Lambda(\w)U^*(\w), \quad \text{ a.e. }\w\in E,
\]
where $\Lambda(\w):=\text{diag}(\lambda_1(\w),\dots, \lambda_m(\w)).$
\end{lemma}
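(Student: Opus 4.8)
The plan is to prove the two assertions in turn: first that each $\lambda_i$ is measurable on $E$, and then that a measurable diagonalizing family $U(\w)$ exists. For the eigenvalues I would use the variational (Courant--Fischer / Ky Fan) characterizations together with a separability argument. For a fixed vector $x$ the function $\w\mapsto \langle G(\w)x,x\rangle$ is a polynomial in the entries of $G$, hence measurable; since $x\mapsto\langle G(\w)x,x\rangle$ is continuous on the unit sphere, which is separable, $\lambda_1(\w)=\sup_{\|x\|=1}\langle G(\w)x,x\rangle$ equals the supremum over a fixed countable dense subset and is therefore measurable as a countable supremum of measurable functions. More generally, Ky Fan's principle gives $\sum_{i=1}^{j}\lambda_i(\w)=\sup\{\operatorname{tr}(G(\w)Q):Q=Q^{*}=Q^{2},\ \operatorname{rank}Q=j\}$; the set of such projections is compact and $Q\mapsto\operatorname{tr}(G(\w)Q)$ is continuous, so again the supremum may be restricted to a countable dense subset, making each partial sum $S_j$ measurable and hence $\lambda_j=S_j-S_{j-1}$ measurable. (Alternatively one can note that the coefficients of $\det(\lambda I-G(\w))$ are measurable and that the sorted real roots of a monic polynomial depend continuously on its coefficients.)

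For the diagonalization the real difficulty is that the multiplicities of the eigenvalues, and with them the eigenspaces, need not be constant in $\w$. I would deal with this by a finite measurable decomposition of $E$. For each $j$ the set $\{\w:\lambda_j(\w)=\lambda_{j+1}(\w)\}$ is measurable, so partitioning $E$ according to which of the consecutive differences $\lambda_j-\lambda_{j+1}$ vanish produces at most $2^{m-1}$ measurable pieces, and it suffices to argue on each piece. On a fixed piece the number $r$ of distinct eigenvalues and the multiplicities $m_1,\dots,m_r$ are constant, the distinct values $\mu_1(\w)>\cdots>\mu_r(\w)$ are measurable (each being some $\lambda_j$), and the spectral projection onto the $\mu_k(\w)$-eigenspace is given by the Lagrange-type formula $P_k(\w)=\prod_{l\ne k}\bigl(G(\w)-\mu_l(\w)I\bigr)\bigl(\mu_k(\w)-\mu_l(\w)\bigr)^{-1}$, which is a measurable matrix-valued function of constant rank $m_k$.

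It then remains to show that a measurable family $P(\w)$ of orthogonal projections of constant rank $r$ on a measurable set $F$ admits a measurable field of orthonormal bases of its ranges. For each $r$-element subset $J\subseteq\{1,\dots,m\}$ the Gram determinant of $\{P(\w)e_i:i\in J\}$ is measurable, so the set where this family is linearly independent is measurable; ordering the subsets $J$ lexicographically and assigning to $\w$ the first admissible $J$ yields a measurable partition $F=\bigsqcup_J F_J$, and on $F_J$ the Gram--Schmidt process applied to $(P(\w)e_i)_{i\in J}$ in increasing order gives $r$ measurable orthonormal sections spanning $\operatorname{range}P(\w)$. Applying this to $P_1(\w),\dots,P_r(\w)$ on each piece of the original decomposition and stacking the resulting vectors as columns produces a measurable $U(\w)$ with $U(\w)U(\w)^{*}=I$ and $G(\w)=U(\w)\Lambda(\w)U(\w)^{*}$; patching over the finitely many pieces gives $U$ on all of $E$.

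I expect the main obstacle to be exactly the bookkeeping around the non-constant multiplicity locus: once $E$ is cut into pieces on which the spectral pattern is constant, everything is essentially Lagrange interpolation and Gram--Schmidt, but one must be careful to check that each cut and each interpolation formula stays within the measurable category. The eigenvalue measurability, by contrast, is a short separability argument and should present no serious difficulty.
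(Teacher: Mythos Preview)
The paper does not supply its own proof of this lemma: it is quoted as Lemma~2.3.5 of \cite{RS95} and used as a black box, so there is nothing in the present paper to compare your argument against line by line.

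That said, your proposal is a correct and self-contained proof. The eigenvalue part via Ky Fan's maximum principle is clean: for each fixed rank-$j$ projection $Q$ the map $\w\mapsto\operatorname{tr}(G(\w)Q)$ is linear in the entries of $G$ and hence measurable, the Grassmannian of such $Q$ is compact so a countable dense set suffices for the supremum, and the differences $S_j-S_{j-1}$ recover the individual $\lambda_j$. For the eigenvectors, your stratification of $E$ by the pattern of coincidences among the $\lambda_j$ is exactly the right move; on each stratum the Lagrange interpolation formula for the spectral projections is well defined (the denominators $\mu_k(\w)-\mu_l(\w)$ are nowhere zero there) and visibly measurable, and the subsequent ``choose the first independent $r$-tuple of projected standard vectors, then Gram--Schmidt'' produces measurable orthonormal columns because every operation involved is rational in already-measurable quantities with nonvanishing denominators.

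The only place worth a sentence of extra care is the patching: you end up with a finite measurable partition of $E$ (first by multiplicity pattern, then within each eigenspace by the lexicographically first admissible index set $J$), and $U(\w)$ is defined piecewise; since the pieces are measurable and the definitions on each piece are measurable, the global $U$ is measurable. You have this implicitly, but it is the step a careful reader will want spelled out. Otherwise the argument is complete and would serve perfectly well as a proof of the cited lemma.
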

 
In \cite{Hel64}, Helson introduced  range functions and used this notion to completely characterize shift  invariant spaces.
Later on, several authors have used this framework to describe and characterize frames and bases of these spaces.
See for example \cite{Bow00, CP10, dBDVR94, dBDVRI94, RS95}.
We will mention the required definitions and some known results that we need later for giving the proofs of our results. 
We refer to \cite{Bow00,dBDVR94, dBDVRI94, RS95} for a complete description and the proofs.
\begin{definition}\label{U}
Let $f\in L^2(\R^d)$ and fix $\U \subset \R^d$ to be a measurable set of representatives  of the quotient $\R^d/\Z^d.$
For  $\w\in \U,$ the {\it fiber} $\tau f(\w)$ of $f$ at $\w$ is the sequence
\[
\tau f(\w)= \{\widehat{f}(\w+k)\}_{k\in\Z^d}.
\]
\end{definition}
Here $\widehat{f}$ denotes the Fourier transform of the function $f,$ that is $\widehat{f}(\w)\!= \int_{\R^d}e^{-2\pi i\w x}f(x)\, dx$ when $f\in L^1(\R^d).$
We observe that if $f\in L^2(\R^d),$ then the fiber $\tau f(\w)$ belongs to $\ell^2(\Z^d)$ for almost every $\w\in \U.$

If $V$ is a finitely generated SIS and $\w\in \U$ we define the {\it fiber space} associated to $V$ and $\w$ as follows
\[
J_V(\w)=\overline{\{\tau f(\w)\colon f\in V\}},
\]
where the closure is taken in the norm of $\ell^2(\Z^d).$

With the above definitions we have:

\begin{lemma}[Proposition 5.6 of \cite{ACP11}]\label{dimension}
Let $V=S(\Phi)$ be a finitely generated SIS. Then
\[
\dim(J_V(\w))= \text{rk }(G_{\Phi}(\w)), \, \text{ a.e. }\w\in \U.
\]
\end{lemma}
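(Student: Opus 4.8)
The plan is to reduce the identity to two facts: first, that $G_{\Phi}(\w)$ is precisely the Gram matrix in $\ell^2(\Z^d)$ of the fibers $\tau\varphi_1(\w),\dots,\tau\varphi_m(\w)$; and second, that the fiber space $J_V(\w)$ coincides with the linear span of these fibers for a.e.\ $\w\in\U$. Granting these, the lemma follows from the elementary linear-algebra fact that, for any finite family of vectors in a Hilbert space, the rank of its Gram matrix equals the dimension of its linear span.

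\emph{Fiberization.} I would first recall, from the Helson range-function framework referenced above, that $\tau$ extends to a unitary operator from $L^2(\R^d)$ onto $L^2(\U,\ell^2(\Z^d))$ --- this is Plancherel's theorem together with the identity $\|\tau f(\w)\|_{\ell^2(\Z^d)}^2=\sum_{k\in\Z^d}|\widehat{f}(\w+k)|^2$ --- and that it intertwines $T_k$ with multiplication by the unimodular scalar $e^{-2\pi i k\w}$, since $\widehat{T_kf}(\w)=e^{-2\pi i k\w}\widehat{f}(\w)$ and $e^{-2\pi i k j}=1$ for $j\in\Z^d$. Reading off the inner product of $\ell^2(\Z^d)$ from the definition \eqref{gram} gives $[G_{\Phi}(\w)]_{ij}=\langle\tau\varphi_i(\w),\tau\varphi_j(\w)\rangle_{\ell^2(\Z^d)}$, so $G_{\Phi}(\w)$ is the Gram matrix of $\tau\varphi_1(\w),\dots,\tau\varphi_m(\w)$; since the kernel of a Gram matrix consists exactly of the coefficient vectors producing the zero combination, $\text{rk}(G_{\Phi}(\w))=\dim\mbox{span}\{\tau\varphi_1(\w),\dots,\tau\varphi_m(\w)\}$ at every $\w$ where the fibers are defined, i.e.\ a.e.

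\emph{Identifying the fiber space.} Set $N(\w)=\mbox{span}\{\tau\varphi_i(\w)\colon i=1,\dots,m\}$, a subspace of $\ell^2(\Z^d)$ of dimension at most $m$, hence closed. One inclusion is immediate: each $\varphi_i\in V$ gives $\tau\varphi_i(\w)\in J_V(\w)$, and since $J_V(\w)$ is a closed subspace it contains $N(\w)$. For the reverse inclusion I would argue by orthogonality: if $f\perp S(\Phi)$ then $f\perp T_k\varphi_i$ for all $k\in\Z^d$ and all $i$, and pushing this through the unitary $\tau$ and the intertwining relation yields $\int_{\U}e^{2\pi i k\w}\langle\tau f(\w),\tau\varphi_i(\w)\rangle\,d\w=0$ for every $k\in\Z^d$; as $\w\mapsto\langle\tau f(\w),\tau\varphi_i(\w)\rangle$ lies in $L^1(\U)$ and has all Fourier coefficients zero, it vanishes a.e., so $\tau f(\w)\perp N(\w)$ a.e. Applying this to each member of a countable dense subset of $V^{\perp}$ and discarding the union of the resulting null sets shows $\tau f(\w)\in N(\w)$ a.e.\ for all $f\in V$; combined with the measurability of $\w\mapsto N(\w)$ (which follows from Lemma \ref{medibilidad} applied to $G_{\Phi}$) and the essential uniqueness of the range function of a SIS, this gives $J_V(\w)=N(\w)$ a.e. Together with the previous paragraph, $\dim J_V(\w)=\text{rk}(G_{\Phi}(\w))$ for a.e.\ $\w\in\U$.

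\emph{Main obstacle.} The only genuinely delicate point is the bookkeeping of exceptional null sets in the inclusion $J_V(\w)\subseteq N(\w)$: each statement ``$\tau f(\w)\in N(\w)$'' holds only off an $f$-dependent null set, while $V$ is uncountable. This is handled by separability of $V$ --- pick a countable dense subset, take the union of the countably many exceptional null sets, and pass to the a.e.\ limit along an $L^2(\U,\ell^2(\Z^d))$-convergent subsequence, using that $N(\w)$ is finite-dimensional hence closed --- together with the fact that a shift invariant space determines an essentially unique measurable range function. Everything else (unitarity of $\tau$, the intertwining relation, the vanishing-Fourier-coefficient step, and the rank-of-Gram-matrix identity) is routine.
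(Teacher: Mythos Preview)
The paper does not give its own proof of this lemma; it is quoted from \cite{ACP11} as part of the preliminaries, which the authors explicitly say are stated without proof. In fact the paper also quotes, as part (iv)(a) of Lemma~\ref{Lema 3.2}, the statement that $\{\varphi_1,\dots,\varphi_m\}$ generates $V$ if and only if $\{\tau\varphi_1(\w),\dots,\tau\varphi_m(\w)\}$ spans $J_V(\w)$ a.e.; combined with your observation that $G_\Phi(\w)$ is the Gram matrix of these fibers, the lemma is then a one-line consequence. So there is no proof in the paper to compare against, and your argument is in effect a self-contained proof of Lemma~\ref{Lema 3.2}(iv)(a).

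Your strategy is the standard one and is essentially correct, but one step is garbled. The sentence ``Applying this to each member of a countable dense subset of $V^{\perp}$ and discarding the union of the resulting null sets shows $\tau f(\w)\in N(\w)$ a.e.\ for all $f\in V$'' is not a direct implication: knowing $\tau g(\w)\in N(\w)^{\perp}$ for a dense set of $g\in V^{\perp}$ does not by itself say anything about fibers of elements of $V$. What your $V^{\perp}$ computation actually buys is the inclusion $W\subseteq V$, where $W:=\{g:\tau g(\w)\in N(\w)\text{ a.e.}\}$ is the SIS with range function $N$: for $g\in W$ and $f\in V^{\perp}$ one has $\langle g,f\rangle=\int_{\U}\langle\tau g(\w),\tau f(\w)\rangle\,d\w=0$. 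The reverse inclusion $V\subseteq W$ is trivial (the generators lie in $W$ and $W$ is shift invariant), and then $V=W$ together with uniqueness of the range function gives $J_V(\w)=N(\w)$ a.e. The countable-density manoeuvre is not needed to pass from $V^{\perp}$ to $V$; it is only relevant, as you correctly note in your final paragraph, to replace the per-$f$ null sets by a single one. With that reorganization the proof is clean.
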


\begin{lemma}\label{Lema 3.1}
If $f\in L^2(\R^d),$ then
\begin{enumerate}
\item[$(i)$] the sequence $\tau f(\w)= \{\widehat{f}(\w+k)\}_{k\in\Z^d}$ is a well-defined sequence in $\ell^2(\Z^d)$ a.e. $\w\in\U.$
\item[$(ii)$] $\|\tau f(\w)\|_{\ell^2}$ is a measurable function of $\w$ and
\[
\|f\|^2= \|\widehat{f}\|^2= \int_{\U} \|\tau f(\w)\|_{\ell^2}^2 \, d\w.
\]
\end{enumerate}
\end{lemma}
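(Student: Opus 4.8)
The plan is to deduce both statements from Plancherel's theorem together with the observation that the translates $\{\U+k\}_{k\in\Z^d}$ tile $\R^d$ up to a set of measure zero, which is immediate since $\U$ is a measurable set of representatives of $\R^d/\Z^d$.

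First I would start from the identity $\|f\|^2=\|\widehat f\|^2=\int_{\R^d}|\widehat f(\w)|^2\,d\w$, which holds because the Fourier transform is unitary on $L^2(\R^d)$. Writing $\R^d$ as the essentially disjoint union $\bigcup_{k\in\Z^d}(\U+k)$ and applying the change of variables $\w\mapsto\w+k$ on each piece, this becomes $\sum_{k\in\Z^d}\int_{\U}|\widehat f(\w+k)|^2\,d\w$.

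Next, since the integrand is non-negative, Tonelli's theorem allows interchanging the sum and the integral, so that $\|f\|^2=\int_{\U}\big(\sum_{k\in\Z^d}|\widehat f(\w+k)|^2\big)\,d\w=\int_{\U}\|\tau f(\w)\|_{\ell^2}^2\,d\w$, which is exactly the norm identity in $(ii)$. The function $\w\mapsto\|\tau f(\w)\|_{\ell^2}^2=\sum_{k}|\widehat f(\w+k)|^2$ is measurable as a pointwise increasing limit of the partial sums of the measurable functions $\w\mapsto|\widehat f(\w+k)|^2$, and taking square roots preserves measurability, so $\w\mapsto\|\tau f(\w)\|_{\ell^2}$ is measurable. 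Finally, because the left-hand side $\|f\|^2$ is finite, the non-negative integrand $\|\tau f(\w)\|_{\ell^2}^2$ must be finite for a.e.\ $\w\in\U$, which says precisely that $\tau f(\w)\in\ell^2(\Z^d)$ for a.e.\ $\w$, giving $(i)$.

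There is essentially no obstacle in this argument; the only points deserving a word of care are the verification that $\{\U+k\}_{k\in\Z^d}$ partitions $\R^d$ up to a null set (which is exactly the defining property of a set of representatives of the quotient) and the appeal to Tonelli to swap $\sum$ and $\int$, which is legitimate precisely because every function involved is non-negative and measurable.
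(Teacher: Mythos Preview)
Your proof is correct. The paper does not actually supply a proof of this lemma: it is listed among the preliminary results that are ``generally stated without proofs'' with a blanket reference to \cite{Bow00, dBDVR94, dBDVRI94, RS95}, so there is no argument in the paper to compare against. Your argument---Plancherel, tiling $\R^d$ by $\{\U+k\}_{k\in\Z^d}$, Tonelli to swap sum and integral, and then reading off a.e.\ finiteness of $\|\tau f(\w)\|_{\ell^2}$ from finiteness of the integral---is the standard one found in those references and is exactly what one would expect here.
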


\begin{lemma}\label{Lema 3.2}
Let $V$ be a finitely generated SIS in $L^2(\R^d).$ Then we have
\begin{enumerate}
\item[$(i)$] $J_V(\w)$ is a closed subspace of $\ell^2(\Z^d)$ for a.e. $\w\in \U.$
\item[$(ii)$] $V=\{f\in L^2(\R^d)\colon \tau f(\w)\in J_V(\w) \mbox{ for a.e. } \w\in \U\}.$
\item[$(iii)$] For each $f\in L^2(\R^d)$ we have that $\|\tau(P_V f)(\w)\|_{\ell^2}$ is a measurable function of the variable $\w$ and
\[
\tau(P_V f)(\w)=  P_{J_V (\w)}(\tau f(\w)).
\]
\item[$(iv)$] Let $\varphi_1, \dots, \varphi_m\in L^2(\R^d).$ We have that
\begin{enumerate}
\item[$(a)$] $\{\varphi_1, \dots, \varphi_m\}$ is a set of generators of $V,$ if and only if $\{\tau\varphi_1(\w), \dots,\tau\varphi_m(\w)\}$
spans $J_{V}(\w)$ for a.e. $\w\in \U.$
\item[$(b)$] The integer translations of $\varphi_1, \dots, \varphi_m$ are a frame (resp. Riesz basis) of $V,$ if and only if
$\tau\varphi_1(\w), \dots,\tau\varphi_m(\w)$ are a frame (resp. Riesz basis) of $J_V(\w)$ with the same frame (resp. Riesz) bounds, for a.e. $\w\in \U.$
\end{enumerate}
\end{enumerate}
\end{lemma}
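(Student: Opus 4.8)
The whole proof rests on the \emph{fiberization} map $\tau\colon L^2(\R^d)\to L^2(\U,\ell^2(\Z^d))$, $f\mapsto\tau f$, which by Lemma~\ref{Lema 3.1} is a well-defined linear isometry, and which is moreover onto --- regrouping the values of $\widehat f$ over the cosets $\w+\Z^d$, $\w\in\U$, recovers $f$ --- hence a unitary operator. The plan is to first record its only structural feature that we need, the covariance relation $\tau(T_k f)(\w)=e^{-2\pi i k\cdot\w}\,\tau f(\w)$ (so that, more generally, multiplying $f$ by a $\Z^d$-periodic $p\in L^\infty$ amounts on the fiber side to $\tau f(\w)\mapsto p(\w)\tau f(\w)$, and trigonometric polynomials in $f$ become finite linear combinations of integer translates), together with the elementary identity
\[
\langle f,T_k\varphi\rangle=\int_\U e^{2\pi i k\cdot\w}\,\langle\tau f(\w),\tau\varphi(\w)\rangle_{\ell^2}\,d\w\qquad(f,\varphi\in L^2(\R^d),\ k\in\Z^d),
\]
obtained from Plancherel and the periodization in Lemma~\ref{Lema 3.1}(ii); in words, the numbers $\langle f,T_k\varphi\rangle$, $k\in\Z^d$, are exactly the Fourier coefficients of the $\Z^d$-periodic function $\w\mapsto\langle\tau f(\w),\tau\varphi(\w)\rangle_{\ell^2}$.

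Write $V=S(\Phi)$, $\Phi=\{\varphi_1,\dots,\varphi_m\}$, and put $G(\w)=\operatorname{span}_{\C}\{\tau\varphi_1(\w),\dots,\tau\varphi_m(\w)\}$. Since $G(\w)$ is finite dimensional, hence closed, and $G(\w)\subseteq J_V(\w)$ trivially, the first task is to prove $G(\w)=J_V(\w)$ a.e., which yields item~$(i)$ outright. For this I would only observe that, by \eqref{gram}, the Gramian $G_\Phi(\w)$ is the Gram matrix of $\{\tau\varphi_i(\w)\}_i$ in $\ell^2(\Z^d)$, so $\operatorname{rk}(G_\Phi(\w))=\dim G(\w)$, while Lemma~\ref{dimension} gives $\dim J_V(\w)=\operatorname{rk}(G_\Phi(\w))$; equal finite dimensions plus one inclusion force equality a.e. Item~(iv)(a) then follows once~$(ii)$ is available: if $\{\varphi_i\}$ generates $V$ the fibers span $J_V(\w)=G(\w)$ a.e.\ by what was just said; conversely, if $\tau\varphi_i(\w)$ span $J_V(\w)$ a.e., then (by~$(ii)$ applied to $J_V$) each $\varphi_i\in V$, so $W:=S(\varphi_1,\dots,\varphi_m)\subseteq V$, and $J_W(\w)=\operatorname{span}\{\tau\varphi_i(\w)\}=J_V(\w)$ a.e., whence $W=V$ again by~$(ii)$.

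For~$(ii)$, one inclusion is the definition of $J_V$. Let $\widetilde V=\{f\in L^2(\R^d):\tau f(\w)\in J_V(\w)\text{ a.e.}\}$; this is a linear subspace containing $V$, and it is closed because $\tau$ is an isometry and each $J_V(\w)$ is closed, so an $L^2$-limit of such $f$'s has, along a subsequence, a.e.-convergent fibers lying in $J_V(\w)$. The displayed identity shows $g\perp T_k\varphi_i$ for all $k$ iff the $\Z^d$-periodic function $\w\mapsto\langle\tau g(\w),\tau\varphi_i(\w)\rangle_{\ell^2}$ vanishes a.e., hence $g\perp V$ iff $\tau g(\w)\perp G(\w)=J_V(\w)$ for a.e.\ $\w$. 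Consequently, for $f\in\widetilde V$ the vector $f-P_Vf$ belongs to $\widetilde V\cap V^{\perp}$, so $\tau(f-P_Vf)(\w)$ lies in and is orthogonal to $J_V(\w)$ a.e.\ and therefore vanishes a.e., giving $f=P_Vf\in V$; thus $\widetilde V=V$. Item~$(iii)$ is then immediate: by~$(ii)$, $\tau(P_Vf)(\w)\in J_V(\w)$ a.e.; since $f-P_Vf\in V^\perp$, $\tau f(\w)-\tau(P_Vf)(\w)\perp J_V(\w)$ a.e.; these two facts say precisely $\tau(P_Vf)(\w)=P_{J_V(\w)}(\tau f(\w))$ a.e., and measurability of $\w\mapsto\|\tau(P_Vf)(\w)\|_{\ell^2}$ is Lemma~\ref{Lema 3.1}(ii) applied to $P_Vf$.

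Finally, for~(iv)(b), combining the displayed identity with Plancherel on the torus $\R^d/\Z^d$ yields, for every $f\in V$,
\[
\sum_{i=1}^m\sum_{k\in\Z^d}|\langle f,T_k\varphi_i\rangle|^2=\int_\U\sum_{i=1}^m|\langle\tau f(\w),\tau\varphi_i(\w)\rangle_{\ell^2}|^2\,d\w,\qquad \|f\|^2=\int_\U\|\tau f(\w)\|_{\ell^2}^2\,d\w,
\]
so the frame inequalities $A\|f\|^2\le\sum_{i,k}|\langle f,T_k\varphi_i\rangle|^2\le B\|f\|^2$ (for all $f\in V$) become \emph{integrated} inequalities over $\U$; likewise, expanding a general finite combination $\sum_{i,k}c_{i,k}T_k\varphi_i$ with $p_i(\w)=\sum_k c_{i,k}e^{-2\pi i k\cdot\w}$, the Riesz-sequence inequalities become
\[
A\int_\U\sum_i|p_i(\w)|^2\,d\w\le\int_\U\Bigl\|\sum_i p_i(\w)\tau\varphi_i(\w)\Bigr\|_{\ell^2}^2\,d\w\le B\int_\U\sum_i|p_i(\w)|^2\,d\w
\]
over all trigonometric-polynomial vectors $(p_i)$. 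In each case one direction (pointwise a.e.\ $\Rightarrow$ integrated) is trivial, and for the other I would argue by localization: if the corresponding pointwise bound on $J_V(\w)$ failed on a set of positive measure, Lemma~\ref{medibilidad} (applied to the measurable matrix $G_\Phi$) produces a measurable field of witnessing vectors $\w\mapsto v(\w)\in J_V(\w)$, which, $\tau$ being onto, equals $\tau f$ for some $f\in V$ supported on the bad set, contradicting the integrated inequality; together with~(iv)(a) (spanning $\Leftrightarrow$ completeness), this gives the frame, resp.\ Riesz-basis, assertion with the same bounds. I expect this last localization --- and the analogous upgrade from ``for each $f$, a.e.\ $\w$'' to ``a.e.\ $\w$, for all $v\in J_V(\w)$'' implicit in~$(ii)$ --- to be the only genuinely technical point; it rests on the surjectivity of $\tau$ and on the measurability provided by Lemma~\ref{medibilidad}, everything else being bookkeeping with the covariance relation and Plancherel.
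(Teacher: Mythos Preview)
The paper does not prove this lemma at all: it is stated as a known result, with the sentence ``We refer to \cite{Bow00,dBDVR94,dBDVRI94,RS95} for a complete description and the proofs'' covering it and the surrounding material. So there is no proof in the paper to compare against; you have supplied one where the authors deliberately deferred to the literature.

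Your argument is essentially the standard fiberization proof found in those references (Bownik \cite{Bow00} and Ron--Shen \cite{RS95} in particular): identify $\tau$ as unitary, use the covariance $\tau(T_kf)(\w)=e^{-2\pi i k\cdot\w}\tau f(\w)$ and the Fourier-coefficient identity for $\langle f,T_k\varphi\rangle$, then read off (ii)--(iv) from the resulting integrated-versus-pointwise correspondence. The logic is sound throughout. Two small comments. First, your appeal to Lemma~\ref{dimension} to conclude $J_V(\w)=G(\w)$ is formally legitimate inside this paper, since the authors quote that lemma independently from \cite{ACP11}; but be aware that in the primary sources the implication runs the other way (one first proves $J_V(\w)=\operatorname{span}\{\tau\varphi_i(\w)\}$ directly, and \emph{then} deduces the dimension formula), so if you were writing this self-contained you would want the direct argument instead. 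Second, the localization step in (iv)(b) is only sketched: producing a measurable ``witnessing'' field from Lemma~\ref{medibilidad} works, but one has to say that the violating set is where a specific eigenvalue of $G_\Phi(\w)$ (the smallest nonzero one for the lower frame bound, the largest for the upper) crosses the threshold, and take the corresponding measurable eigenvector; this is routine but not quite automatic from the lemma as stated.
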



\subsection{Optimality for the class of SIS in $L^2(\R^d)$}

\medskip
In \cite{ACHM07} the authors give a solution for the case where the approximation class is the class of SIS in $L^2(\R^d).$ For this, they reduce the optimization
problem into an uncountable set of finite dimensional problems in the Hilbert space $\mathcal{H}=\ell^2(\Z^d).$ 
\begin{theorem}[Theorem 2.3 of \cite{ACHM07}]\label{Teorema-original}
Let $\mathcal{F}=\{f_1, \cdots, f_m\}$ be a set of functions in $L^2(\R^d).$ Let $\lambda_1(\w)\ge\dots\ge\lambda_m(\w)$ be
the eigenvalues of the Gramian $G_{\mathcal{F}}(\w).$ Then, there exists $V^*\in\V^{\ell}=\{V\colon V \mbox{ is a SIS of length at most } \ell\}$
such that 
\[
\sum_{i=1}^m\|f_i- P_{V^*} f_i\|^2\le \sum_{i=1}^m\|f_i- P_{V} f_i\|^2, \quad \forall \, V\in\V^{\ell}.
\] 
Moreover, we have that
\begin{enumerate}
\item[(1)] The eigenvalues $\lambda_i(\w),$ $1\le i\le m$ are $\Z^d-$periodic, measurable functions in $L^2(\U)$ and 
\[
\mathcal{E}(\F, \ell)= \sum_{i=\ell+1}^m \int_{\U} \lambda_i(\w)\, d\w.
\]  
\item[(2)] Let $\theta_{i} (\w)= \lambda_i^{-1/2}(\w)$ if $ \lambda_i(\w)$ is different from zero, and zero otherwise. Then, there
exists a choice of measurable left eigenvectors $Y^1(\w), \cdots, Y^{\ell}(\w)$ with $Y^i= (y^i_1,\cdots, y^i_m)^t,$ $i=1, \cdots, \ell,$
associated with the first $\ell$ largest eigenvalues of $G_{\F}(\w)$ such that the functions defined by
\[
\widehat{\varphi}_i(\w)= \theta_i (\w)\sum_{j=1}^m y^i_j(\w) \widehat{f_j}(\w), \quad i=1, \cdots, \ell, \, \w\in\R^d
\]
are in $L^2(\R^d).$ Furthermore, the corresponding set of functions $\Phi=\{\varphi_1, \cdots, \varphi_{\ell}\}$ is a generator
set for the optimal subspace $V^*$ and the set $\{\varphi_i(\cdot -k), k\in \Z^d, i=1, \cdots, \ell\}$ is a Parseval frame for $V^*.$
\end{enumerate}
\end{theorem}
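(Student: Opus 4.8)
The plan is to transfer the problem, through the fiberization of Lemmas \ref{Lema 3.1} and \ref{Lema 3.2}, to an uncountable family of finite-dimensional least-squares problems in $\ell^2(\Z^d)$, to solve each of them by the Eckart--Young best-approximation principle, and then to reassemble the pointwise optimizers into a measurable range function by means of Lemma \ref{medibilidad}. Concretely, I would first rewrite the functional \eqref{error} fiberwise: for any finitely generated SIS $V$, combining Lemma \ref{Lema 3.2}$(iii)$ with Lemma \ref{Lema 3.1}$(ii)$ gives
\[
\sum_{i=1}^m \|f_i - P_V f_i\|^2 = \int_{\U}\Big(\sum_{i=1}^m \big\|\tau f_i(\w) - P_{J_V(\w)}\big(\tau f_i(\w)\big)\big\|_{\ell^2}^2\Big)\,d\w.
\]
By Lemma \ref{dimension} together with \eqref{length-gramian}, $V\in\V^{\ell}$ exactly when $\dim J_V(\w)\le\ell$ for a.e.\ $\w\in\U$, and conversely every measurable range function $\w\mapsto W(\w)\subseteq\ell^2(\Z^d)$ with $\dim W(\w)\le\ell$ a.e.\ is the fiber function of a (unique) SIS of length at most $\ell$. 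Hence it is enough to minimize, for a.e.\ fixed $\w$, the quantity $\sum_{i=1}^m\|x_i-P_W x_i\|_{\ell^2}^2$ over subspaces $W\subseteq\ell^2(\Z^d)$ with $\dim W\le\ell$, where $x_i=\tau f_i(\w)$, and then to check that the minimizers can be chosen measurably in $\w$.

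For the pointwise problem I would introduce the positive semidefinite operator $R(\w)=\sum_{i=1}^m x_i\otimes x_i$ on $\ell^2(\Z^d)$, of rank at most $m$. If $X(\w)\colon\C^m\to\ell^2(\Z^d)$ is the operator with columns $x_1,\dots,x_m$, then $R(\w)=X(\w)X(\w)^*$ and $G_{\F}(\w)=X(\w)^*X(\w)$, so $R(\w)$ and $G_{\F}(\w)$ have the same nonzero eigenvalues with multiplicities, namely the positive ones among $\lambda_1(\w)\ge\dots\ge\lambda_m(\w)$. Since $\sum_{i=1}^m\|x_i-P_Wx_i\|_{\ell^2}^2=\mathrm{tr}\,R(\w)-\mathrm{tr}\big(P_W R(\w)P_W\big)$ and $\mathrm{tr}\,R(\w)=\sum_i\|\tau f_i(\w)\|_{\ell^2}^2<\infty$ a.e., the Eckart--Young principle shows that the minimum over $\dim W\le\ell$ is attained at any span $W^*(\w)$ of eigenvectors of $R(\w)$ for its $\ell$ largest eigenvalues, and equals $\mathrm{tr}\,R(\w)-\sum_{i=1}^{\ell}\lambda_i(\w)=\sum_{i=\ell+1}^{m}\lambda_i(\w)$.

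To globalize, I would apply Lemma \ref{medibilidad} to $G_{\F}(\w)$: the eigenvalues $\lambda_i$ are measurable and there is a measurable matrix $U(\w)$ with orthonormal columns $u^1(\w),\dots,u^m(\w)$, $G_{\F}(\w)u^i(\w)=\lambda_i(\w)u^i(\w)$. Then $X(\w)u^i(\w)$ is an eigenvector of $R(\w)$ with eigenvalue $\lambda_i(\w)$, the vectors $\{X(\w)u^i(\w):\lambda_i(\w)>0\}$ are pairwise orthogonal with $\|X(\w)u^i(\w)\|_{\ell^2}^2=\lambda_i(\w)$, and $J(\w):=\mathrm{span}\{X(\w)u^1(\w),\dots,X(\w)u^{\ell}(\w)\}$ is a measurable range function with $\dim J(\w)\le\ell$ that is optimal for the pointwise problem; so $J$ is the fiber function of a SIS $V^*$ with $\ell(V^*)\le\ell$, and by the previous step $V^*$ minimizes \eqref{error} over $\V^{\ell}$. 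For $(1)$: $\sum_{i=1}^m\lambda_i(\w)=\mathrm{tr}\,G_{\F}(\w)=\sum_{i=1}^m\|\tau f_i(\w)\|_{\ell^2}^2\in L^1(\U)$ by Lemma \ref{Lema 3.1}$(ii)$ (with integral $\sum_i\|f_i\|^2$), so each $\lambda_i$ is nonnegative, $\Z^d$-periodic, measurable and integrable, and integrating the fiberwise minimum yields $\mathcal{E}(\F,\ell)=\sum_{i=\ell+1}^m\int_{\U}\lambda_i(\w)\,d\w$. For $(2)$: rescale the columns of $U(\w)$ so that the chosen eigenvectors $Y^i(\w)=\lambda_i(\w)^{1/2}u^i(\w)$ satisfy $\|Y^i(\w)\|_{\ell^2}^2=\lambda_i(\w)$, extend $\lambda_i$ and $y^i_j$ $\Z^d$-periodically, and set $\widehat{\varphi_i}(\w)=\theta_i(\w)\sum_{j=1}^m y^i_j(\w)\widehat{f_j}(\w)$ with $\theta_i=\lambda_i^{-1}$ on $\{\lambda_i>0\}$ and $0$ otherwise; then $\tau\varphi_i(\w)=\theta_i(\w)X(\w)Y^i(\w)$, so a direct computation gives $\|\tau\varphi_i(\w)\|_{\ell^2}^2=\mathbf{1}_{\{\lambda_i(\w)>0\}}$ (hence $\varphi_i\in L^2(\R^d)$) and $\{\tau\varphi_i(\w)\}_{i=1}^{\ell}$ is an orthonormal system whose span is $J(\w)=J_{V^*}(\w)$. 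By Lemma \ref{Lema 3.2}$(iv)$ this gives that $\{\varphi_1,\dots,\varphi_\ell\}$ generates $V^*$ and $E(\{\varphi_1,\dots,\varphi_\ell\})$ is a Parseval frame of $V^*$.

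The step I expect to be the main obstacle is the globalization: turning the pointwise optimizers $W^*(\w)$ into a genuinely measurable range function forces one to select eigenvectors of $G_{\F}(\w)$ measurably across the sets where the eigenvalues coincide or vanish, and simultaneously to keep the error and the generators under control near $\{\lambda_i=0\}$. Lemma \ref{medibilidad} is precisely the tool that resolves this, and it also explains why the truncated normalization $\theta_i=\lambda_i^{-1}$ together with $\|Y^i\|_{\ell^2}^2=\lambda_i$ is the right choice to place the generators $\varphi_i$ in $L^2(\R^d)$.
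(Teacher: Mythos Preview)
The paper does not give its own proof of this statement: Theorem~\ref{Teorema-original} is quoted verbatim from \cite{ACHM07} in the preliminaries, where the authors explicitly say that known results are stated without proof. It is then used as a black box in the proof of Theorem~\ref{solucion-PB}. So there is nothing in the paper to compare your argument against line by line.

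That said, your sketch is correct and is essentially the argument of \cite{ACHM07}: fiberize the error via Lemmas~\ref{Lema 3.1}--\ref{Lema 3.2}, solve the pointwise problem by Eckart--Young on the finite-rank operator $R(\w)=X(\w)X(\w)^*$ (using that $R(\w)$ and $G_{\F}(\w)=X(\w)^*X(\w)$ share nonzero spectrum), and globalize using Lemma~\ref{medibilidad}. Your handling of the normalization in part~(2) is the right one: with $Y^i=\lambda_i^{1/2}u^i$ and $\theta_i=\lambda_i^{-1}$ you get $\tau\varphi_i(\w)=\lambda_i(\w)^{-1/2}X(\w)u^i(\w)$, which has unit norm on $\{\lambda_i>0\}$ and vanishes elsewhere, so the integer translates form a Parseval frame by Lemma~\ref{Lema 3.2}$(iv)(b)$. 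The one place where you should be slightly more careful is the sentence ``every measurable range function $\w\mapsto W(\w)$ with $\dim W(\w)\le\ell$ a.e.\ is the fiber function of a SIS of length at most $\ell$'': this is true, but you never actually need the abstract correspondence, since you build $J_{V^*}(\w)$ explicitly as the span of the measurable fibers $\tau\varphi_1(\w),\dots,\tau\varphi_\ell(\w)$, and Lemma~\ref{Lema 3.2}$(iv)(a)$ already identifies that span with the fiber space of $S(\varphi_1,\dots,\varphi_\ell)$.

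For completeness: the paper does hint at an alternative route in the final remark of Section~\ref{section0}, observing that the discrete Theorem~\ref{PA} applied fiberwise (with the trivial partition $\mathcal{N}=\{\Z^d\}$) recovers Theorem~\ref{Teorema-original}. That is morally the same argument as yours, just with the pointwise Eckart--Young step packaged as the $\kappa=1$ case of Theorem~\ref{PA}.
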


\subsection{Extra invariance}\label{EI}

We will need some definitions and known results concerning extra-invariance for shift invariant spaces. These are described in this subsection.
\begin{definition}
Let $V\subset L^2(\R^d)$ be a SIS. We define the {\it invariance set} as follows
\[
M:=\{x\in\R^d\colon T_x f\in V, \,\forall f\in V\}.
\]
\end{definition}
In \cite{ACHKM10} (see also \cite{ACP11}), the authors proved that the invariance set of a shift invariance space $V\subset L^2(\R^d)$ is a closed additive subgroup of $\R^d$ that contains $\Z^d.$ For instance, in the case of the line the invariant set of a
shift invariant
space could be  $\Z, \frac{1}{n}\Z $ for some  $n\in\N$  or $\R.$  

\begin{definition} Let $\Phi \subset L^2(\R^d)$. We will say that $V=S(\Phi)$ is {\it $M$ extra-invariant} if $T_m f\in V$ for all $m\in M$ and for all $f\in V.$

If $M=\R^d$ we will say that $V$ has {\it total extra-invariance}.
\end{definition}
One example of a translation invariant space in $\R$ is the Paley-Wiener space of functions that are bandlimited  to $[-1/2, 1/2]$ defined by
\[
PW= \{f\in L^2(\R)\colon \mbox{ supp }(\widehat{f})\subseteq [-1/2,1/2]\}.
\]  
It is easy to prove that for a measurable set $\Omega\subset\R^d,$ the space
\begin{equation}\label{Wiener}
V_{\Omega}:=\{f\in L^2(\R^d)\colon \mbox{supp}(\widehat{f})\subset \Omega\}
\end{equation}
is translation invariant. Moreover, Wiener's theorem (see \cite{Hel64}) proves that any closed translation invariant subspace of $L^2(\R^d)$ is of the
form \eqref{Wiener}.

If $V$ is a shift invariant space of length $\ell$ and $M$ is an additive  subgroup of $\R^d$ containing $\Z^d$, we will say that $V$ has {\it extra-invariance} $M$ if $V$ is $M-$invariant.
Note that in this case, if $\Phi$ is a set of generators of $V$, i.e. $V=S(\Phi)$, then 
$$
S(\Phi)= \overline{\mbox{span}}\{T_k\phi\colon \phi\in\Phi, k\in\Z^d\}=
\overline{\mbox{span}}\{T_{\alpha}\phi\colon \phi\in\Phi, \alpha\in M\}.
$$

In \cite{ACHKM10} the authors characterize those shift invariant spaces $V\subset L^2(\R)$ that have extra-invariance. They show that either $V$ is translation invariant, or there exists a maximum positive integer $n$ 
such that $V$ is $\frac{1}{n}\Z-$invariant. 

The d-dimensional case is consider in  \cite{ACP11}. There, a characterization of the extra invariance of $V$ when $M$ is not all $\R^d$ is obtained. 
Given $M$ a closed subgroup of $\R^d$ containing $\Z^d$ and 
$M^*=\{x\in\R^d\colon \langle x,m\rangle\in\Z\quad\forall m\in M\},$ the authors construct a special partition 
$\{B_{\sigma}\}_{\sigma\in\mathcal{N}}$ of $\R^d,$
where each $B_{\sigma}$ is an $M^*-$periodic set and the index set $\mathcal{N}$ is a section of the quotient $\Z^d/M^*.$
More precisely, for each $\sigma\in\mathcal{N},$
\begin{equation}\label{def-Bsigma}
B_{\sigma}=\Omega+\sigma+M^*=\bigcup_{m^*\in M^*} (\Omega+\sigma)+m^*,
\end{equation} 
where $\Omega$ is a section of the quotient $\R^d/\Z^d.$ We refer to \cite{ACP11} for more details.

Using this partition,
for each $\sigma\in\mathcal{N},$ they define the subspaces associated to a given SIS $V$
\begin{equation}\label{Usigma}
V_{\sigma}= \{f\in L^2(\R^d)\colon \widehat{f}= \chi_{B_{\sigma}}\widehat{g}, \text{ with } g\in V\}.
\end{equation}

Given $f\in L^2(\R^d)$ define for $\si \in \mathcal{N}$, the function $f^{\si}$ by $\widehat{f^{\si}}=\widehat{f}\chi_{B_{\sigma}}$.

The authors give a characterization of the $M-$invariance of $V$ in terms of the subspaces 
$V_{\sigma}.$ More specifically they prove that
\begin{theorem}\label{extra-inv}
If $V\subset L^2(\R^d)$ is a SIS and $M$ is a closed subgroup of $\R^d$ containing $\Z^d,$ then the following are equivalent.
\begin{enumerate}
\item[(i)] $V$ is $M-$invariant,
\item[(ii)] $V_{\sigma}\subset V$ for all $\sigma\in\mathcal{N},$
\item[(iii)] $J_{V_{\sigma}}(\w)\subset J_{V}(\w)$ for almost every $\w$ and each $\sigma\in\mathcal{N},$ 
\item[(iv)] if $V=S(\Phi)$  then $\tau\varphi^{\si}(\w) \in J_V(\w)$ a.e. $\w\in\U$
for all  $\varphi\in\Phi$ and all $\si\in\mathcal{N}$.
\end{enumerate}
\end{theorem}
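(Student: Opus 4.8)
The plan is to carry the whole equivalence down to the fibre level, where it becomes a statement about invariance of the subspaces $J_V(\w)\subseteq\ell^2(\Z^d)$ under coordinate projections adapted to the partition $\{B_\sigma\}$. We may take the section $\Omega$ in \eqref{def-Bsigma} to be $\U$. Then, since $\Z^d\subseteq M$ forces $M^*\subseteq\Z^d$, the sets $\{\sigma+M^*\}_{\sigma\in\mathcal{N}}$ partition $\Z^d$, and for $\w\in\U$ one has $B_\sigma\cap(\w+\Z^d)=\{\w+k:\ k\in\sigma+M^*\}$. A direct computation with Definition \ref{U} then gives, for a.e.\ $\w\in\U$,
\[
\tau f^{\sigma}(\w)=P_{\sigma}\,\tau f(\w),\qquad
\tau(T_{\alpha}f)(\w)=D_{\alpha}(\w)\,\tau f(\w)\ \ (\alpha\in M),
\]
where $P_{\sigma}$ is the orthogonal projection of $\ell^2(\Z^d)$ onto the coordinates indexed by $\sigma+M^*$ and $D_{\alpha}(\w)$ is the diagonal unitary with $k$-th entry $e^{-2\pi i\langle\alpha,\w+k\rangle}$. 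The crucial point is that, because $\langle\alpha,m^*\rangle\in\Z$ for $\alpha\in M$ and $m^*\in M^*$, this entry is constant on each block, so
\[
D_{\alpha}(\w)=\sum_{\sigma\in\mathcal{N}}e^{-2\pi i\langle\alpha,\w+\sigma\rangle}\,P_{\sigma}.
\]
This block–scalar identity is exactly what the special partition is engineered to deliver, and it drives everything below.

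Three of the links are then essentially bookkeeping. $(ii)\Leftrightarrow(iii)$ is the standard monotonicity of the fibre assignment for shift invariant spaces ($W\subseteq W'$ iff $J_W(\w)\subseteq J_{W'}(\w)$ a.e.), which follows from the description of a SIS by its fibre spaces (Lemma \ref{Lema 3.2}). For $(iii)\Leftrightarrow(iv)$: since $\chi_{B_{\sigma}}$ is $\Z^d$-periodic, the map $f\mapsto f^{\sigma}$ commutes with the integer translations, so if $V=S(\Phi)$ then $V_{\sigma}\subseteq S(\{\varphi^{\sigma}:\varphi\in\Phi\})$ while $\varphi^{\sigma}\in V_{\sigma}$ for each $\varphi\in\Phi$; combining this with Lemma \ref{Lema 3.2}(iv)(a) gives $J_{V_{\sigma}}(\w)\subseteq\overline{\mbox{span}}\{\tau\varphi^{\sigma}(\w):\varphi\in\Phi\}$ and $\tau\varphi^{\sigma}(\w)\in J_{V_{\sigma}}(\w)$ a.e., so (iii) and (iv) say the same thing. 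For $(iii)\Rightarrow(i)$: given $f\in V$ and $\alpha\in M$, write $\tau(T_{\alpha}f)(\w)=\sum_{\sigma}e^{-2\pi i\langle\alpha,\w+\sigma\rangle}\tau f^{\sigma}(\w)$; each term is a unimodular multiple of $\tau f^{\sigma}(\w)\in J_{V_{\sigma}}(\w)\subseteq J_V(\w)$, the series converges in $\ell^2(\Z^d)$ since $\sum_{\sigma}\|\tau f^{\sigma}(\w)\|^2=\|\tau f(\w)\|^2$, and $J_V(\w)$ is closed; hence $\tau(T_{\alpha}f)(\w)\in J_V(\w)$ a.e., so $T_{\alpha}f\in V$ by Lemma \ref{Lema 3.2}(ii).

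The real content is $(i)\Rightarrow(iii)$, and this is where I expect the main obstacle to lie. From $M$-invariance, $T_{\alpha}V=V$ for every $\alpha\in M$, and since $J_{T_{\alpha}V}(\w)=D_{\alpha}(\w)J_V(\w)$ we get $D_{\alpha}(\w)J_V(\w)=J_V(\w)$ for a.e.\ $\w$; running over a countable dense subset of $M$ and using the strong continuity $\alpha\mapsto D_{\alpha}(\w)$, one obtains a full-measure set of $\w$ on which $J_V(\w)$ is invariant under $D_{\alpha}(\w)$ for \emph{every} $\alpha\in M$. It then remains to upgrade this to invariance under each single block projection $P_{\sigma}$, for then $\tau f^{\sigma}(\w)=P_{\sigma}\tau f(\w)\in J_V(\w)$ for all $f\in V$, giving $J_{V_{\sigma}}(\w)\subseteq J_V(\w)$. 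The mechanism: the functions $\sigma\mapsto e^{-2\pi i\langle\alpha,\sigma\rangle}$, $\alpha\in M$, run over \emph{all} characters of the discrete abelian group $\mathcal{N}\cong\Z^d/M^*$, whose Pontryagin dual is the compact group $M/\Z^d$; since the characters of a discrete abelian group are weak-$*$ dense in $\ell^{\infty}(\mathcal{N})$ (a Fourier-uniqueness statement on $M/\Z^d$), the operators $\sum_{\sigma}c_{\sigma}P_{\sigma}$ with $c$ a trigonometric polynomial on $\mathcal{N}$ — all of which lie in $\mbox{span}\{D_{\alpha}(\w):\alpha\in M\}$ — are weak-operator dense among all block–scalar operators, and a weak-operator limit of operators fixing the (weakly closed) subspace $J_V(\w)$ again fixes it; hence each $P_{\sigma}$ fixes $J_V(\w)$. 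When $\mathcal{N}$ is finite this collapses to a one-line Vandermonde/linear-independence-of-characters argument; the delicate regime is $\mathcal{N}$ infinite — in particular $M=\R^d$, where $M^*=\{0\}$ and the $P_{\sigma}$ are the single-coordinate projections — and the combination of this density step with the measure-theoretic ``a.e.\ $\w$, all $\alpha$'' bookkeeping is the part that needs genuine care; everything else is translation between $V$, the $V_{\sigma}$, generators and fibres.
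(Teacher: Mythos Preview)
The paper does not prove Theorem \ref{extra-inv}: it is quoted from \cite{ACP11} (and, for $d=1$, \cite{ACHKM10}) as background, so there is no in-paper proof to compare against. Your fibre-level strategy is the standard one and is sound; the key step $(i)\Rightarrow(iii)$ via the duality $\widehat{\Z^d/M^*}\cong M/\Z^d$ and the block-scalar identity $D_\alpha(\w)=e^{-2\pi i\langle\alpha,\w\rangle}\sum_{\sigma} e^{-2\pi i\langle\alpha,\sigma\rangle}P_\sigma$ is exactly the right mechanism. Note that $M/\Z^d$ is a closed, hence compact, subgroup of $\R^d/\Z^d$, so the density step can in fact be made fully rigorous without any weak-$*$ gymnastics: Bochner-integrate $\alpha\mapsto e^{2\pi i\langle\alpha,\w+\sigma_0\rangle}D_\alpha(\w)v$ over $M/\Z^d$ with respect to normalized Haar measure; the integrand is continuous with values in the closed subspace $J_V(\w)$, and a coordinate-wise check shows the integral equals $P_{\sigma_0}v$.

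One correction: $B_\sigma=\U+\sigma+M^*$ is $M^*$-periodic but \emph{not} $\Z^d$-periodic (for $k\in\Z^d\setminus M^*$ one has $B_\sigma+k=B_{\sigma'}$ with $\sigma'\neq\sigma$), so your clause ``since $\chi_{B_\sigma}$ is $\Z^d$-periodic'' is false. The conclusion $(T_kf)^\sigma=T_k(f^\sigma)$ you draw from it is nonetheless correct, for the trivial reason that on the Fourier side both sides are $e^{-2\pi i\langle k,\cdot\rangle}\chi_{B_\sigma}\widehat f$ and multiplication operators commute. With that justification repaired, your route $(iv)\Rightarrow\varphi^\sigma\in V\Rightarrow V_\sigma\subseteq V\Rightarrow(iii)$ (via Lemma \ref{Lema 3.2}(ii)) goes through, and the remaining implications are, as you say, bookkeeping.
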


\section{Optimality for the class of SIS with extra-invariance}\label{section1}

Here we consider the approximation problem for the class of finitely generated SIS with extra invariance under a given  {\it proper} subgroup  $M$ of $\R^d.$ 

Let us start introducing some notation. Let $m, \ell\in\N,$ $M$ be a closed proper subgroup of $\R^d$ containing $\Z^d$ and $\F=\{f_1,\dots, f_m\}\subset L^2(\R^d).$
 Define 
\begin{equation}\label{classV}
\V_M^{\ell}=\{ V: V \;{\text {is a SIS of  length at most}}\;\ell \;{\text {and}} \;  V {\text{ is }} M{\text {-invariant}}\}.
\end{equation}

Let $\mathcal{N}=\{\si_1,\dots,\si_{\kappa}\}$ be a section of the quotient $\Z^d/M^*$ and $\{B_{\si}\::\si\in \mathcal{N}\}$ the partition defined in \eqref{def-Bsigma}.

For each ${\sigma}\in\mathcal{N},$ 
we consider $\F^{\sigma}=\{f_1^{\sigma},\dots, f_m^{\sigma}\}\subset L^2(\R^d)$ where, $f_j^{\sigma}$ is such that
$\widehat{f_j^{\sigma}}= \widehat{f_j}{\chi_{B_{\sigma}}}$ for $j=1, \dots, m.$ 
Also, let \mbox{$\widetilde{\F}=\{f_1^{\sigma_1},\dots, f_m^{\sigma_1},\dots\dots,f_1^{\sigma_{\kappa}},\dots, f_m^{\sigma_{\kappa}}\}.$}

For each $\w\in\U$ let $G_{\widetilde{\F}}(\w)$ be the associated Gramian matrix of the vectors in $\widetilde{\F}$ with eigenvalues 
\[
\lambda_1(\w)\ge\cdots\ge \lambda_{m\kappa}(\w)\ge0.
\]
Using Lemma \ref{medibilidad}, we have that these eigenvalues are measurable functions. 

Since  $f_i^{\si_s}$ is orthogonal to $f_i^{\si_t}$ if $s \neq t$, the Gramian $G_{\widetilde{\F}}(\w)$
is a diagonal  block matrix with  blocks $G_{\sigma}(\w), \;\sigma\in\mathcal{N}.$ Here $G_{\sigma}(\w)$ is the $m\times m$ Gramian associated to the data $\F^{\si}.$
On the other hand, using Lemma \ref{medibilidad} we have that 
\[
G_{\sigma}(\w)= U_{\sigma}(\w) \Lambda_{\si}(\w) U_{\sigma}^*(\w) \quad a.e. \;\;\w \in \U
\]
where 
 $U_{\si}$ are unitary   and $\Lambda_{\sigma}(\w):=\text{diag}(\lambda_1^{\sigma}(\w),\dots, \lambda_m^{\sigma}(\w))\in\C^{m\times m}$ 
 and they are also measurable matrices as in Lemma \ref{medibilidad}. We also have $\lambda_1^{\sigma}(\w)\ge \dots \ge \lambda_m^{\sigma}(\w)$ for each $\sigma\in\mathcal{N}.$
 
 Using the decompositions of the blocks $G_{\sigma}$ we have that 
 \begin{equation}\label{P1}
  G_{\widetilde{\F}}(\w) = U(\w) \Lambda(\w) U^*(\w)
 \end{equation}
 where $U$ has blocks $U_{\si}$ in the diagonal, and $\Lambda$ is diagonal with blocks $\Lambda_{\si}$.
We want to recall here that for almost each $\w$ the matrix $\Lambda(\w)$ collects all the eigenvalues of the Gramian $G_{\widetilde{\F}}(\w)$ and
the columns of the matrix $U(\w)$ are the associated left eigenvectors. Note that an  eigenvector associated to the
eigenvalue $\la_j^{\si}(\w)$ has all the components not corresponding to the block $\si$ equal to zero.

Now for each fixed $\w\in\U,$ we consider $\{(i_1(\w),j_1(\w)), \dots, (i_{n}(\w),j_{n}(\w))\}$ with $i_s(\w)\in\mathcal{N}$ and $j_s(\w)\in\{1,\dots, m\}$ and $n=m\kappa$ such that 
\[
\lambda_{j_1(\w)}^{i_1(\w)}\ge \dots \ge \lambda_{j_{n}(\w)}^{i_{n}(\w)}\ge 0
\]
are the ordered eigenvalues of $G_{\widetilde{\F}}(\w),$ with corresponding left eigenvectors  $Y^{(i_s(\w), j_s(\w))}\in\C^n,$ for $s=1, \cdots, n.$
 
Here $i_s(\w)$ indicates the block of the matrix $G_{\widetilde{\F}}(\w)$ in which the eigenvalue $\lambda_{j_s(\w)}^{i_s(\w)}(\w)$ is found 
and $j_s(\w)$ indicates the displacement in this block of the matrix $G_{\widetilde{\F}}(\w)$. More precisely, we have that  $\lambda_{j_s(\w)}^{i_s(\w)}(\w)$ coincides with $\lambda_{(i_s(\w)-1) m +j_s(\w)}(\w),$ the $((i_s(\w)-1) m +j_s(\w))-$th eigenvalue of $G_{\widetilde{\F}}(\w).$
When $\w\in \U$ is fixed, we will write $i_s$ instead of $i_s(\w)$ and  $j_s$ instead of $j_s(\w).$ 

We will prove now that  $\gamma_s(\w):=\lambda_{j_{s(\w)}}^{i_s(\w)}(\w)$ is measurable as a function on $\w$ for each $s=1, \cdots, n.$

Let $s\in\{1, \dots, n\}$ fixed. Let $i_s(\w)\in\mathcal{N}$ and $j_s(\w)\in\{1,\dots, m\}.$ We have that $\gamma_s(\w)= \lambda_j^{\sigma}(\w)$ for all $\w\in E_{{\sigma}j}:= \{\w\in\U\colon i_s(\w)= \sigma,  j_s(\w)=j \}.$

We observe that 
\[
E_{\sigma j}= \{\w\in\U\colon i_s(\w)=\sigma,  j_s(\w)=j\}= \{\w\in \U\colon \lambda_s(\w)= \lambda_j^{\sigma}(\w)\}.
\]
Using Lemma \ref{medibilidad} applied to $G_{\widetilde{\F}}(\w)$ and  $G_{\sigma}(\w),$ we have that $\lambda_s$ and  $\lambda_j^{\sigma}$ are 
measurable functions of $\w.$ Therefore $E_{\sigma j}$ are measurable sets.

We further observe that $\gamma_s(\w)=  \lambda_j^{\sigma} (\w),$ for $\w\in E_{\sigma j}.$ So $\gamma_s(\w)$ is a measurable function.
A similar argument shows that the eigenvectors are measurable.

Finally we define $h_s\colon \R^d\to \C,$ for $s=1,\dots, \ell$
\begin{equation}\label{def-h}
h_s(\w):= \theta_{j_s}^{i_s}(\w) \sum_{k=1}^m y_{(i_s-1)m+k}^{(i_s, j_s)}(\w) \widehat{f}_k^{i_s}(\w),
\end{equation}
where $\theta_{j_s}^{i_s}(\w)= (\lambda_{j_s}^{i_s}(\w))^{-1/2}$ if $\lambda_{j_s}^{i_s}(\w)\neq 0$ and $\theta_{j_s}^{i_s}(\w)=0$ otherwise.

Now we are ready to state the main result of this section.

\begin{theorem}\label{solucion-PB}
Let $m, \ell\in\N,$ and $M$ be a closed proper subgroup of $\R^d$ containing $\Z^d.$ Assume that $\F=\{f_1,\dots, f_m\}\subset L^2(\R^d)$ is given data  and let $\V_M^{\ell}$ be the class defined in \eqref{classV}. Then, there exists a shift invariant space $V^*\in\V_M^{\ell}$ such that
\begin{equation}\label{solution}
V^*= \mathop{argmin}_{V\in\V_M^{\ell}} \sum_{j=1}^{m} \|f_j-P_{V} f_j\|^2. 
\end{equation}

Furthermore, with the above notation,
\begin{enumerate}
\item[(1)] The eigenvalues $\{\lambda_j^{\si}(\w): \si \in \mathcal{N} , j=1,\dots ,m\},$  are $\Z^d-$periodic, measurable functions in $L^2(\U)$ and the error of  approximation is
\[
\mathcal{E}(\F, M,  \ell) :=  \sum_{j=1}^{m} \|f_j-P_{V^*} f_j\|^2=  \int_{\U}\sum_{s= \ell +1}^{m\kappa} \lambda_{j_s}^{i_s}(\w) \;d\w.
\]  
\item[(2)] The functions $\{h_1,\dots, h_{\ell}\}$ defined in \eqref{def-h} are in  $L^2(\R^d)$ and if $\varphi_1,\dots,\varphi_{\ell}$
are defined by $\widehat{\varphi_j} = h_j$, then $\Phi=\{\varphi_1, \cdots, \varphi_{\ell}\}$ is a generator
set for the optimal subspace $V^*$ and the set $\{\varphi_i(\cdot -k), k\in \Z^d, i=1, \cdots, \ell\}$ is a Parseval frame for $V^*.$
\end{enumerate}
\end{theorem}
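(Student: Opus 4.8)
The plan is to reduce the extra-invariance approximation problem to the unconstrained problem of Theorem~\ref{Teorema-original}, applied not to the original data $\F$ but to the enlarged family $\widetilde{\F}$ obtained by projecting each $f_j$ onto the pieces $B_\sigma$. The key structural fact is Theorem~\ref{extra-inv}: a SIS $V$ is $M$-invariant if and only if $\tau\varphi^\sigma(\w)\in J_V(\w)$ for all generators $\varphi$ and all $\sigma\in\mathcal{N}$, equivalently $J_{V_\sigma}(\w)\subset J_V(\w)$ for a.e.\ $\w$. So first I would observe that for \emph{any} SIS $V$ (not necessarily $M$-invariant) one has, fiberwise,
\[
\|f_j-P_Vf_j\|^2=\int_\U\|\tau f_j(\w)-P_{J_V(\w)}\tau f_j(\w)\|_{\ell^2}^2\,d\w,
\]
by Lemma~\ref{Lema 3.2}(iii), and moreover $\tau f_j(\w)=\sum_{\sigma\in\mathcal N}\tau f_j^\sigma(\w)$ with the summands living in mutually orthogonal coordinate subspaces of $\ell^2(\Z^d)$ (since the $B_\sigma$ are disjoint mod $\Z^d$). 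Hence the pointwise error splits as $\sum_{\sigma}\|\tau f_j^\sigma(\w)-P_{J_V(\w)}\tau f_j^\sigma(\w)\|^2$ whenever $J_V(\w)$ is $M$-invariant in the sense that it respects this coordinate decomposition, which by Theorem~\ref{extra-inv} is exactly the $M$-invariance constraint.

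Second, I would set up the correspondence between $M$-invariant SIS of length $\le\ell$ and measurable fields of subspaces $W(\w)\subset\ell^2(\Z^d)$ that are ``$M$-admissible'' (respect the $B_\sigma$-decomposition) and have $\dim W(\w)\le\ell$ a.e.; this uses Lemma~\ref{Lema 3.2}(ii)--(iv) and Lemma~\ref{dimension} together with the length formula \eqref{length-gramian}, exactly as in \cite{ACHM07}. For a fixed $\w$, the problem becomes: minimize $\sum_{j,\sigma}\operatorname{dist}(\tau f_j^\sigma(\w),W(\w))^2$ over subspaces $W(\w)$ of dimension $\le\ell$ subject to the admissibility constraint. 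The crucial point is that an admissible $W(\w)$ of dimension $\le\ell$ is the \emph{same thing} as an arbitrary $\ell$-dimensional subspace of the (internal) direct sum $\bigoplus_\sigma(\text{span of the }\sigma\text{-block})$ — because any subspace spanned by vectors each supported in a single block is automatically admissible, and conversely. Therefore the constrained fiber problem for $\F$ is identical to the \emph{unconstrained} fiber problem for the family $\widetilde{\F}=\{f_j^\sigma\}$ of $m\kappa$ vectors, whose fibers are precisely the $\tau f_j^\sigma(\w)$. Applying Theorem~\ref{Teorema-original} to $\widetilde{\F}$ then yields: the optimal fiber $W^*(\w)$ is the span of the left eigenvectors of $G_{\widetilde{\F}}(\w)$ for its $\ell$ largest eigenvalues, the error is $\int_\U\sum_{s=\ell+1}^{m\kappa}\lambda_{j_s}^{i_s}(\w)\,d\w$, and the functions $h_1,\dots,h_\ell$ of \eqref{def-h} (which are exactly the generators produced by that theorem for the data $\widetilde{\F}$, rewritten using the block structure $G_{\widetilde{\F}}=\bigoplus_\sigma G_\sigma$) lie in $L^2$, generate $V^*$, and give a Parseval frame.

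Third, I would verify two consistency points that the reduction requires. (a) The optimal $W^*(\w)$ built from eigenvectors of $G_{\widetilde{\F}}(\w)$ is automatically admissible: because $G_{\widetilde{\F}}$ is block-diagonal with blocks $G_\sigma$, every eigenvector is supported in a single block (as noted in the paragraph preceding the theorem), so $W^*(\w)$ respects the $B_\sigma$-decomposition and the resulting SIS $V^*$ is genuinely $M$-invariant by Theorem~\ref{extra-inv}(iv). (b) Measurability: the ordered eigenvalues $\gamma_s(\w)=\lambda_{j_s}^{i_s}(\w)$ and a measurable choice of eigenvectors are available — this is precisely the discussion in the excerpt using Lemma~\ref{medibilidad} applied to both $G_{\widetilde{\F}}$ and each $G_\sigma$, and the identity $E_{\sigma j}=\{\w:\lambda_s(\w)=\lambda_j^\sigma(\w)\}$. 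With (a), (b), and the fiberwise optimality in hand, one assembles: for any competitor $V\in\V_M^\ell$, $\sum_j\|f_j-P_Vf_j\|^2=\int_\U\sum_{j,\sigma}\operatorname{dist}(\tau f_j^\sigma(\w),J_V(\w))^2\,d\w\ge\int_\U\sum_{s=\ell+1}^{m\kappa}\gamma_s(\w)\,d\w=\sum_j\|f_j-P_{V^*}f_j\|^2$, and the measurable generators $\varphi_j$ realize the lower bound, proving \eqref{solution} and items (1)--(2).

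I expect the main obstacle to be item (a) combined with making the ``admissible subspace $\Longleftrightarrow$ arbitrary subspace of $\bigoplus_\sigma$(block)'' equivalence fully rigorous at the level of SIS rather than just fibers — i.e.\ checking that the measurable field $W^*(\w)$ does come from an honest finitely generated SIS of length exactly $\le\ell$ that is $M$-invariant, and that no competitor can do better because $M$-invariance forces its fibers into the admissible class where the eigenvalue bound of Theorem~\ref{Teorema-original} (now for $\widetilde{\F}$) applies verbatim. The $L^2$-membership of the $h_s$ and the Parseval frame property are then inherited directly from Theorem~\ref{Teorema-original} applied to $\widetilde{\F}$, since \eqref{def-h} is just its formula specialized to the block-diagonal Gramian.
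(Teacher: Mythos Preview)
Your proposal is correct and follows essentially the same strategy as the paper: apply Theorem~\ref{Teorema-original} to the enlarged data $\widetilde{\F}$, use the block-diagonal structure of $G_{\widetilde{\F}}$ (so that each eigenvector is supported in a single $B_\sigma$) together with Theorem~\ref{extra-inv}(iv) to see that the resulting optimal space $V^*$ is $M$-invariant, and then use the orthogonal decomposition $V=\bigoplus_{\sigma}V_\sigma$ for $V\in\V_M^\ell$ to convert $\pi(\widetilde{\F},V)$ into $\pi(\F,V)$ and conclude optimality over $\V_M^\ell$. The paper executes this directly at the SIS level (via the identity $\|P_Vf\|^2=\sum_{\sigma}\|P_Vf^\sigma\|^2$) rather than fiberwise, but the content is the same; your one loose phrase---that the constrained problem for $\F$ is ``identical'' to the unconstrained problem for $\widetilde{\F}$---is not literally true (the feasible sets differ), but your point~(a) supplies exactly the missing ingredient, so the argument stands.
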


\color{black}

\begin{proof}

Let $\V^{\ell}$ be the class defined in Theorem \ref{Teorema-original}, that  is $\V^{\ell}$ is the set of all shift invariant spaces $V$ that can be generated by $\ell$ or less generators. (Note that we do not ask the elements of the class $\V^{\ell}$ to have extra invariance.)

Define $V^* \in \V^{\ell}$ to be the optimal space given by Theorem \ref{Teorema-original} for the data $\widetilde{\F}$.
That is,
\begin{equation}\label{eq-optimal}
\sum_{\si \in \mathcal{N}} \sum_{j=1}^m \|f_j^{\sigma} - P_{V^*}f_j^{\sigma}\|^2 \le \sum_{\si \in \mathcal{N}} \sum_{j=1}^m \|f_j^{\sigma} - P_{V}f_j^{\sigma}\|^2 \qquad \forall \;\;V \in \V^{\ell}.
\end{equation}

We claim that $V^* \in \V_M^{\ell}$ (in particular is $M$ extra-invariant) and it is optimal in this class for the data $\F,$ i.e.
\begin{equation}\label{optimality}
 \sum_{j=1}^m \|f_j - P_{V^*}f_j\|^2 \le \sum_{j=1}^m \|f_j - P_{V}f_j\|^2 \qquad \forall \;\;V \in \V_M^{\ell}.
\end{equation}

Let us prove first that $V^*$ is $M$ extra-invariant. For this we will  check that the generators of $V^*$ satisfy 
condition (iv) in Theorem \ref{extra-inv}. 
We have from   \eqref{P1} that the Gramian $G_{\widetilde{\F}}(\w)$ can be decomposed as $G_{\widetilde{\F}}(\w) = U(\w)\Lambda(\w)U^*(\w)$ with 
	eigenvalues $\{\la_j^{\si}(\w) : \si\in\mathcal{N}, j=1,\dots,m\}.$

By Theorem \ref{Teorema-original}, the $\ell$ generators of $V^*$ have the form defined in \eqref{def-h},
\begin{equation}\label{P2}
\widehat{\varphi}_{s}(\w)=\theta_{j_s}^{i_s}(\w) \sum_{k=1}^m y_{(i_s-1)m+k}^{(i_s, j_s)}(\w) \widehat{f}_k^{i_s}(\w), \qquad {\text{ for }}
s=1,\dots,\ell.
\end{equation}

From \eqref{P2} it is clear that $\widehat{\varphi}_{s}$ is supported in $B_{i_s}$, since each $\widehat{f}_k^{i_s}$ is supported in $B_{i_s}$.
Then if we apply the cut off operator to these generators we obtain $\widehat{\varphi}_{s}^{\si}(\w) = \widehat{\varphi}_{s}(\w) {\text{ if }} \si = i_s(\w) {\text{ and }} \widehat{\varphi}_{s}^{\si}  = 0 \;{\text {otherwise}}.$
So, in any case $\varphi_{s}^{\si} \in V^*$ for all $\si \in \mathcal{N},\;\; s=1,\dots, \ell$ which proves the $M$-invariance of $V^*.$

What is left now is to prove that $V^*$ is optimal over the class $\V_M^{\ell}$, that is $V^*$ satisfies equation \eqref{optimality}.
For this note that if $V\in \V_M^{\ell}$ then $V = \bigoplus_{\si \in \mathcal{N}} V_{\si}.$ So we have for any $f\in L^2(\R^d)$,
\begin{equation*}
\|P_{V} f\|^2 = \|P_{V} \sum_{\si \in \mathcal{N}} f^{\si}\|^2 = \| \sum_{\si \in \mathcal{N}} P_{V}f^{\si}\|^2 = 
\|\sum_{\si \in \mathcal{N}} P_{V^{\sigma}} f^{\sigma}\|^2=
\sum_{\si \in \mathcal{N}}\|P_{V^{\sigma}} f^{\si}\|^2=
\sum_{\si \in \mathcal{N}}\|P_{V} f^{\si}\|^2,
\end{equation*}
which implies together with \eqref{eq-optimal} that 

\begin{equation*}
 \sum_{j=1}^m \| P_{V^*}f_j\|^2 \ge \sum_{j=1}^m \| P_{V}f_j\|^2, \qquad \forall \;\;V \in \V_M^{\ell}.
\end{equation*}
The others claims of the theorem are a direct consequence of Theorem  \ref{Teorema-original}.

\end{proof}
\bigskip
\section{Approximation with Paley-Wiener spaces}\label{section2}

\subsection{Preliminaries}

In this section the class of approximation subspaces  will be finitely  generated SIS with total translation invariance.
That is translation invariant spaces that are generated by the integer translates of a finite number of functions.

 More precisely, given $\ell\in\N$ define $\T^{\ell}$ to be the set of all shift invariant spaces $V=S(\varphi_1,\dots, \varphi_{\ell})$  for some functions $\varphi_1,\dots, \varphi_{\ell}$ in $L^2(\R^d)$, and such that $V$ is  translation invariant and
 the integer translates of $\{\varphi_1,\dots, \varphi_{\ell}\}$ form a Riesz basis of $V$.
 
 Given a set $\F=\{f_1,\dots, f_m\}\subset L^2(\R^d),$ we want to find $V^{*}\in \T^{\ell}$ such that
 \begin{equation}\label{solucion-Tl}
V^*= \mathop{argmin}_{V\in\T^{\ell}} \sum_{j=1}^{m} \|f_j-P_{V} f_j\|^2. 
\end{equation}
Here  $P_{V}$ denotes the orthogonal projection on $V.$

Before going to the approximation problem, we will obtain a characterization of the class $\T^{\ell}.$

Using Wiener's theorem, we have that $V$ is a translation invariant space in $L^2(\R^d)$ if and only if there exists a measurable set $\Omega
\subset\R^d$ such that 
$$V= \{f \in L^2(\R^d): \hat{f}(\omega)=0 {\text{  a.e.  }} \omega \in \R^d \setminus \Omega  \}.$$ 
Since $\Omega$ is unique up to measure zero, we will write $V = V_{\Omega}.$ 

\begin{definition}\label{multitile}
Let $\Omega\subset\R^d$ be measurable and $L\subset \R^d$ be a countable set. We say that $\Omega$ {\it tiles $\R^d$ when translated by $L$ at 
level $\ell\in\N$}
if
\[
\sum_{t\in L} \chi_{\Omega}(\w-t)=\ell, \quad \mbox{ for a.e. }\w\in\R^d.
\]
In case of $L=\Z^d$ we will say that $\Omega$ is an {\it $\ell$ multi-tile}.
\end{definition}

It is known (see for example \cite{Ko13},)  that $\Omega$ is an  $\ell$ multi-tile of $\R^d$, if and only if, up to measure zero, $\Omega$ is the union of $\ell$ measurable and disjoint
$1$ tile sets. i.e. $\Omega$ is a quasi-disjoint union of $\ell$ sets of representatives of $\R^d/\Z^d.$

\begin{lemma}\label{multi}

A measurable set $\Omega\subset \R^d, \;\ell$ multi-tiles $\R^d$ 
if and only if  $$\Omega = \Omega_1\cup\dots\cup \Omega_{\ell } \cup N,$$ where $N$ is a zero measure set, 
and the sets $\Omega_j$,  $ 1\leq j\leq \ell$ are measurable, disjoint and each of them tiles $\R^d$ by translations on $\Z^d$.
\end{lemma}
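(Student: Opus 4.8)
The plan is to establish the two implications separately. The implication ``$\Omega=\Omega_1\cup\dots\cup\Omega_\ell\cup N\Rightarrow\Omega$ is an $\ell$ multi-tile'' is a one-line computation: since $N$ is null and the $\Omega_j$ are pairwise disjoint we have $\chi_\Omega=\sum_{j=1}^\ell\chi_{\Omega_j}$ a.e.; summing this identity over $k\in\Z^d$ (all terms nonnegative, so interchanging the two sums is harmless) and using that each $\Omega_j$ tiles $\R^d$ by $\Z^d$, i.e. $\sum_{k\in\Z^d}\chi_{\Omega_j}(\cdot-k)=1$ a.e. in the sense of Definition \ref{multitile} with $L=\Z^d$ at level $1$, we obtain $\sum_{k\in\Z^d}\chi_\Omega(\cdot-k)=\ell$ a.e.

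For the converse I would work on the fundamental domain $\U=[-1/2,1/2)^d$. Rewriting $\sum_{k\in\Z^d}\chi_\Omega(\w-k)=\ell$ a.e. as a statement about $S(\w):=\{k\in\Z^d:\w+k\in\Omega\}$, we get $\#S(\w)=\ell$ for every $\w$ outside a null set $N_0\subset\U$. The key idea is that although $\w\mapsto S(\w)$ is set-valued, it takes values in the \emph{countable} collection of $\ell$-element subsets of $\Z^d$, so that after fixing an enumeration (hence a linear order $\prec$) of $\Z^d$ one can slice $\U\setminus N_0$ into the countably many pieces $\U_S:=\{\w\in\U\setminus N_0:S(\w)=S\}$, with $S$ ranging over the $\ell$-subsets of $\Z^d$; each $\U_S$ is measurable, being a countable intersection of sets of the form $\Omega-k$ and their complements, and $\U\setminus N_0=\bigsqcup_S\U_S$. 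Writing $S=\{k_1^S\prec\dots\prec k_\ell^S\}$, I would then define
\[
\Omega_j:=\bigsqcup_{S}\bigl(\U_S+k_j^S\bigr),\qquad j=1,\dots,\ell,
\]
i.e. $\Omega_j$ picks out, above each point $\w$, the $j$-th of the $\ell$ translates that lie in $\Omega$.

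What remains is verification, all obtained by reducing modulo $\Z^d$ back to $\U$: (i) each $\Omega_j$ is measurable, being a countable disjoint union of measurable sets; (ii) $\bigcup_{k\in\Z^d}(\Omega_j+k)=\R^d\setminus(N_0+\Z^d)$ and the union is disjoint, so $\Omega_j$ tiles $\R^d$ by $\Z^d$; (iii) $\Omega_i\cap\Omega_j=\emptyset$ for $i\ne j$, since a common point would lie above a single $\w\in\U_S$ and force $k_i^S=k_j^S$; and (iv) since $\w+k\in\Omega\iff k\in S$ whenever $\w\in\U_S$, one has $\Omega\cap(\U_S+\Z^d)=\bigsqcup_{i=1}^\ell(\U_S+k_i^S)$, and summing over $S$ gives $\bigcup_{j=1}^\ell\Omega_j=\Omega\setminus N$ with $N:=\Omega\cap(N_0+\Z^d)$ null; hence $\Omega=\Omega_1\cup\dots\cup\Omega_\ell\cup N$. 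The only genuine obstacle is the measurability/selection issue in the converse, and the observation just used — that the relevant index set of $\ell$-subsets of $\Z^d$ is countable, so that the ``$j$-th translate'' can be chosen piecewise by the fixed order $\prec$ — is precisely what dissolves it, so no measurable-selection theorem is required; everything else is bookkeeping with null sets.
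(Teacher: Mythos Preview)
Your argument is correct. Note, however, that the paper does not supply its own proof of this lemma: it records the statement as known and refers the reader to \cite{Ko13}, so there is no in-paper argument to compare against. Your construction---partitioning the fundamental domain $\U$ according to the $\ell$-element set $S(\w)=\{k\in\Z^d:\w+k\in\Omega\}$, which takes only countably many values, and then using a fixed enumeration of $\Z^d$ to peel off the $j$-th translate measurably---is the standard route and is essentially the argument in the cited reference.
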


The following  proposition characterizes the set $\Omega$ for   the elements in  $\T^{\ell}.$
\begin{proposition}\label{propPW}
A subspace
$V$ is in $\T^{\ell}$ if and only if 
 $V=V_{\Omega}$ with $\Omega$  a measurable $\ell$ multi-tile of $\R^d.$ 
\end{proposition}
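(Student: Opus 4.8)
The plan is to pass to the Fourier side via Wiener's theorem and then identify the fiber spaces $J_{V_\Omega}(\omega)$. By Wiener's theorem every translation invariant $V \subset L^2(\R^d)$ equals $V_\Omega$ for a measurable $\Omega$, unique up to null sets; since the elements of $\T^{\ell}$ are in particular translation invariant, it suffices to show that $V_\Omega$ is generated by $\ell$ functions whose integer translates form a Riesz basis if and only if $\Omega$ is an $\ell$ multi-tile. The main tool is the description, valid for any measurable $\Omega$,
\[
J_{V_\Omega}(\omega) = \{\, a \in \ell^2(\Z^d) : a_k = 0 \ \text{for all } k \notin K_\Omega(\omega) \,\}, \qquad \text{a.e. } \omega \in \U,
\]
where $K_\Omega(\omega) := \{\, k \in \Z^d : \omega + k \in \Omega \,\}$; in particular $\dim J_{V_\Omega}(\omega) = \# K_\Omega(\omega) = \sum_{k \in \Z^d} \chi_\Omega(\omega + k)$ for a.e.\ $\omega$. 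The inclusion ``$\subseteq$'' is immediate: every $f \in V_\Omega$ has $\widehat f$ supported in $\Omega$, so $\tau f(\omega)$ is supported in $K_\Omega(\omega)$, and this is preserved under closure. For ``$\supseteq$'' it suffices, for a.e.\ $\omega$ and each $k_0 \in K_\Omega(\omega)$, to exhibit $g \in V_\Omega$ whose fiber $\tau g(\omega)$ is the canonical vector $e_{k_0}$: take $\widehat g = \chi_{Q \cap \Omega}$ with $Q$ a cube of side length less than one centered at $\omega + k_0$. Then $\widehat g(\omega + k) = 0$ for $k \neq k_0$ and $\widehat g(\omega + k_0) = 1$, while $g \neq 0$ whenever $\omega + k_0$ is a Lebesgue density point of $\Omega$, which holds for a.e.\ such $\omega$.

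For the forward implication, assume $V_\Omega \in \T^{\ell}$, so the integer translates of some $\varphi_1, \dots, \varphi_\ell$ form a Riesz basis of $V_\Omega$. By Lemma \ref{Lema 3.2}, the fibers $\tau\varphi_1(\omega), \dots, \tau\varphi_\ell(\omega)$ form a Riesz basis of $J_{V_\Omega}(\omega)$ for a.e.\ $\omega \in \U$, so $\dim J_{V_\Omega}(\omega) = \ell$ a.e. Together with the displayed identity this gives $\sum_{k \in \Z^d} \chi_\Omega(\omega + k) = \ell$ for a.e.\ $\omega \in \U$; since this function is $\Z^d$-periodic, the equality holds for a.e.\ $\omega \in \R^d$, i.e.\ $\Omega$ is an $\ell$ multi-tile.

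Conversely, assume $\Omega$ is an $\ell$ multi-tile. By Lemma \ref{multi} we may write $\Omega = \Omega_1 \cup \dots \cup \Omega_\ell \cup N$ with $N$ of measure zero, the $\Omega_j$ measurable, pairwise disjoint, and each $\Omega_j$ tiling $\R^d$ by $\Z^d$; thus each $\Omega_j$ is, up to a null set, a set of representatives of $\R^d/\Z^d$, and in particular $\varphi_j$ defined by $\widehat{\varphi_j} = \chi_{\Omega_j}$ lies in $L^2(\R^d)$. A short Fourier-side computation gives $\langle T_k \varphi_j, T_{k'} \varphi_{j'} \rangle = \int_{\Omega_j \cap \Omega_{j'}} e^{-2\pi i (k - k') \omega}\, d\omega$, which is $0$ when $j \neq j'$ by essential disjointness and $\delta_{k, k'}$ when $j = j'$ because $\Omega_j$ is a fundamental domain of $\R^d/\Z^d$; hence $\{\, T_k \varphi_j : k \in \Z^d,\ 1 \le j \le \ell \,\}$ is orthonormal, in particular a Riesz basis of its closed span $S(\varphi_1, \dots, \varphi_\ell)$. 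Finally, since $\{\, e^{-2\pi i k \omega} \,\}_{k \in \Z^d}$ restricted to the fundamental domain $\Omega_j$ is an orthonormal basis of $L^2(\Omega_j)$, the Fourier transform of $S(\varphi_1, \dots, \varphi_\ell)$ is $\bigoplus_{j=1}^\ell L^2(\Omega_j) = L^2(\Omega)$, that is $S(\varphi_1, \dots, \varphi_\ell) = V_\Omega$. Since $V_\Omega$ is translation invariant, $V_\Omega \in \T^{\ell}$.

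The step I expect to be the crux is the fiber-space identity $\dim J_{V_\Omega}(\omega) = \sum_{k} \chi_\Omega(\omega + k)$: its nontrivial inclusion requires the Lebesgue density argument above and cannot simply invoke the Gramian/length machinery, since $\Omega$ is only assumed measurable and $V_\Omega$ is not known a priori to be finitely generated. Everything else reduces to standard computations with the Fourier transform and the fiberization results recalled in Section \ref{preliminaries}.
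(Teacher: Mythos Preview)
Your proof is correct and follows essentially the same route as the paper's: identify $J_{V_\Omega}(\omega)$ with the sequences supported on $K_\Omega(\omega)$, invoke Lemma~\ref{Lema 3.2}(iv)(b) for the forward direction, and use Lemma~\ref{multi} together with $\widehat{\varphi_j}=\chi_{\Omega_j}$ for the converse. The one noteworthy difference is in the nontrivial inclusion of the fiber-space identity: the paper fixes, for each $k\in\Z^d$, the single function $g_k$ with $\widehat{g_k}=\chi_{(\U+k)\cap\Omega}$ and reads off $\tau g_k(\omega)=e_k$ whenever $k\in K_\Omega(\omega)$; since this is a countable family independent of $\omega$, no density-point argument is needed, and it avoids the awkwardness of your cube $Q$ depending on $\omega$ (which makes the statement ``$\tau g(\omega)=e_{k_0}$'' delicate, as $\tau g$ is only defined a.e.).
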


\begin{proof}

Assume first that  $V \in \T^{\ell}$, so $V=V_{\Omega}$ for some measurable $\Omega \subset \R^d$.
Also, as a consequence of Wiener's theorem,  for almost all $\w \in \U$ we have $J_V(\w) \cong \ell^2(O_{\w})$  with 
$O_{\w}=\{ k\in \Z^d: w+k \in \Omega\}.$ To see this, we note that $J_V(\w) \subset \ell^2(O_{\w})$. For the other inclusion,
fix $\w\in \U.$ Using that $\Omega = \bigcup_{k\in\Z^d}E_k$ where $E_k = (\U+k) \cap \Omega$, we have that  $k\in O_{\w}$,
if and only if  $\w + k \in E_k$. Hence,
 if $ a \in \ell^2(O_{\w})$  consider the function  $G_{\w} (\xi) = \sum_{k \in O_{\w}} a_k \chi_{E_k}(\xi)$.
 Since $G_{\w}$ is in $L^2(\Omega),$
 the function $g$ defined by $\widehat{g}=G_{\w}$ is in $V$,  and $\widehat{g}(\w+k) = a_k$
 if $k \in O_{\w}.$ Therefore, $g \in V$ and $a = \tau g(\w) \in  J_V(\w).$  

Now, since $V=S(\varphi_1,\dots, \varphi_{\ell}),$  and the integer translates of $\varphi_1,\dots, \varphi_{\ell}$ form a Riesz basis
of $V,$ using Lemma \ref{Lema 3.2} we obtain that $\{\tau\varphi_1(\w), \dots,\tau\varphi_{\ell}(\w)\}$ form a Riesz basis of $J_V(\w)$ with the same Riesz bounds for a.e. $\w\in \U.$
We conclude that dim($J_V(\w))= \ell$ a. e. $\w \in \U.$

Since $V$ is translation invariant, by the observation above dim$(J_V(\w))= \#O_w$. Then $\#O_{\w} = \ell$ for almost all $\w \in \U,$
which implies that $\Omega$ is an $\ell$ multi-tile. (Here $\#A$ denote the cardinal of the set $A).$

For the converse, assume that $\Omega$ is a measurable $\ell$ multi-tile of $\R^d$. Define $V = V_{\Omega}$.
So, $V$ is translation invariant.

By Lemma \ref{multi} we have that $\Omega = \Omega_1\cup\dots\cup \Omega_{\ell }$  up to a measure zero set,
where each $\Omega_j$ is a set of representatives or $\R^d/\Z^d.$
We define $\varphi_j$ by its Fourier transform:  $\widehat\varphi_j = \chi_{\Omega_j},\quad j=1,\dots,\ell.$

Since $\{  e^{2\pi i \w k} \widehat\varphi_j: k \in \Z^d\}$ is an orthonormal basis of $L^2(\Omega_j),$ we have that
$\{e^{2\pi i \w k} \widehat\varphi_j : k \in \Z^d, j=1,\dots, \ell \}$ is an orthonormal basis of  $L^2(\Omega),$ and so,
$\{t_k\varphi_j:k \in \Z^d, j=1,\dots, \ell \}$ is an orthonormal basis of $V$, in particular a Riesz basis.
\end{proof}

\subsection{The approximation problem for Paley-Wiener Spaces}

Now we come back to our approximation problem.
In order to find an optimal subspace in the class $\T^{\ell}$ for a set of data $\F=\{f_1,\dots, f_m\},$
it is enough to find the associated $\ell$ multi-tile $\Omega$ in $\R^d.$
It is not difficult to see that if we allow $\Omega$ to be {\it {any}} $\ell$ multi-tile the minimum in \eqref{solucion-Tl} may not exist.
So  we will restrict $\Omega$ to be inside a cube that could be arbitrarily large.
Let us fix $N\in\N$.
Define  
\begin{align*}
C_N &:= [-(N+1/2),N+1/2]^d,\\
M_N^{\ell} &:=\{\Omega\subset C_N: \, \Omega \mbox{ is measurable and $\ell$ multi-tiles }  \R^d\} {\text{ and }}\\
\T_N^{\ell} &:= \{V\in \T^{\ell}: V= V_{\Omega} {\text{ with }} \Omega \in M_N^{\ell} \}.
\end{align*}

With this notation we can state the main result of this section.

\begin{theorem}
Assume that  $m, \ell \in\N$  and a set $\F=\{f_1,\dots, f_m\}\subset L^2(\R^d)$, are given. Then for each $N \geq \ell$
 there exists a Paley-Wiener space $V^*\in \T^{\ell}_N$ that satisfies
  \begin{equation}
V^*= \mathop{argmin}_{V\in\\ \T^{\ell}_N} \sum_{j=1}^{m} \|f_j-P_{V} f_j\|^2, 
\end{equation}
where $\T^{\ell}_N$ is the class defined above.
\end{theorem}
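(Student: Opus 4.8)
The plan is to exploit that for a translation invariant space the orthogonal projection is a Fourier multiplier, which turns the minimization into a pointwise maximization over the folded domain $\U$.

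\textbf{Step 1: reduction to a mass maximization.} If $V=V_\Omega$ with $\Omega\subset\R^d$ measurable, then $P_V$ acts on the Fourier side as multiplication by $\chi_\Omega$, so $\|f_j-P_Vf_j\|^2=\int_{\R^d}|\widehat{f_j}|^2\,(1-\chi_\Omega)$. Writing $F:=\sum_{j=1}^m|\widehat{f_j}|^2\in L^1(\R^d)$ we obtain
\[
\sum_{j=1}^m\|f_j-P_Vf_j\|^2=\|F\|_{L^1}-\int_\Omega F .
\]
By Proposition \ref{propPW}, $\T^{\ell}_N=\{V_\Omega:\Omega\in M_N^{\ell}\}$; hence finding $V^*$ is the same as finding an $\ell$ multi-tile $\Omega^*\subset C_N$ that maximizes $\Omega\mapsto\int_\Omega F$.

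\textbf{Step 2: folding and a pointwise bound.} Each $\xi\in\R^d$ is uniquely $\xi=\omega+k$ with $\omega\in\U$ and $k\in\Z^d$; the constraint $\Omega\subset C_N$ confines, up to a null set, the admissible $k$ to the finite set $K:=\{-N,\dots,N\}^d$, and $N\ge\ell$ guarantees $\#K\ge\ell$ (so in particular $\T^{\ell}_N\ne\emptyset$). Being an $\ell$ multi-tile means $\#\{k\in K:\omega+k\in\Omega\}=\ell$ for a.e.\ $\omega\in\U$. Let $\Phi_\ell(\omega):=\max\big\{\sum_{k\in A}F(\omega+k):A\subseteq K,\ \#A=\ell\big\}$ be the sum of the $\ell$ largest entries of the finite list $(F(\omega+k))_{k\in K}$; as a maximum of finitely many measurable functions it is measurable, and $0\le\Phi_\ell\le\sum_{k\in K}F(\cdot+k)$ with $\int_\U\sum_{k\in K}F(\omega+k)\,d\omega=\int_{\cup_{k\in K}(\U+k)}F\le\|F\|_{L^1}$, so $\Phi_\ell\in L^1(\U)$. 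For any admissible $\Omega$,
\[
\int_\Omega F=\int_\U\sum_{k\in K}\chi_\Omega(\omega+k)F(\omega+k)\,d\omega\le\int_\U\Phi_\ell(\omega)\,d\omega,
\]
because the inner sum adds up exactly $\ell$ of the numbers $F(\omega+k)$, $k\in K$.

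\textbf{Step 3: a measurable selection attaining the bound.} For every $\ell$-subset $A\subseteq K$ put $E_A:=\{\omega\in\U:\sum_{k\in A}F(\omega+k)=\Phi_\ell(\omega)\}$, a measurable set, with $\bigcup_A E_A=\U$. Enumerating the $\ell$-subsets of $K$ as $A_1,\dots,A_p$ and disjointifying, $\widetilde E_i:=E_{A_i}\setminus\bigcup_{j<i}E_{A_j}$, yields a measurable partition $\{\widetilde E_i\}_{i=1}^p$ of $\U$. Set
\[
\Omega^*:=\bigcup_{i=1}^p\bigcup_{k\in A_i}(\widetilde E_i+k).
\]
Then $\Omega^*$ is measurable; $\Omega^*\subset\bigcup_{k\in K}(\U+k)\subset C_N$; and for a.e.\ $\omega\in\U$ the set $\{k\in\Z^d:\omega+k\in\Omega^*\}$ equals $A_{i(\omega)}$, where $i(\omega)$ is the unique index with $\omega\in\widetilde E_{i(\omega)}$, so it has cardinality $\ell$. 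By the $\Z^d$-periodicity of this count, $\Omega^*$ is an $\ell$ multi-tile. By construction $\int_{\Omega^*}F=\int_\U\Phi_\ell$, which by Step 2 is the maximal value of $\int_\Omega F$ over admissible $\Omega$. Hence $V^*:=V_{\Omega^*}$ lies in $\T^{\ell}_N$ by Proposition \ref{propPW} and is optimal.

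\textbf{Main difficulty.} The only genuine work is Step 3: carrying out the ``choose the $\ell$ largest'' selection measurably and, when ties occur, ensuring exactly $\ell$ translates are retained at each $\omega$ so that $\Omega^*$ really is an $\ell$ multi-tile; the finite disjointification over the $\ell$-subsets of $K$ handles this. This is also precisely where the cube restriction $\Omega\subset C_N$ is used: without it the list $(F(\omega+k))_{k\in\Z^d}$ would be infinite and the supremum in Step 2 need not be attained, which is why the statement is formulated with $C_N$.
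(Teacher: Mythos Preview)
Your proof is correct and follows essentially the same route as the paper's: reduce to maximizing $\int_\Omega F$ with $F=\sum_{j}|\widehat{f_j}|^2$, fold over $\U$, and for each $\omega$ select the $\ell$ translates $k\in K=\{-N,\dots,N\}^d$ giving the largest values of $F(\omega+k)$. Your explicit disjointification of the sets $E_A$ in Step~3 is in fact tidier than the paper's construction, which takes the union over the (possibly overlapping) sets $E_{\mathbf{k}}$ and tacitly needs the same tie-breaking to ensure the resulting $\Omega^*$ is an \emph{exact} $\ell$ multi-tile.
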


\begin{proof}

First we observe that if a solution space $V^*$ exists then

\begin{equation}\label{Problem2-particular}
V^*= \mathop{argmin}_{V\in\T^{\ell}_N} \sum_{j=1}^{m} \|f_j-P_{V} f_j\|^2 \;
= \mathop{argmax}_{V\in\T^{\ell}_N} \sum_{j=1}^{m} \|P_{V} f_j\|^2,
\end{equation}
and using the definition of $\T^{\ell}_N,$ we have that 
\begin{equation}\label{maximun}
\mathop{max}_{V\in\T^{\ell}_N} \sum_{j=1}^{m} \|P_{V} f_j\|^2= \mathop{max}_{\Omega\in
M^{\ell}_N} \sum_{j=1}
^{m} \|P_{V_{\Omega}} f_j\|^2.
\end{equation}

So, we need to find $\Omega \in M^{\ell}_N$ that yields the maximum in \eqref{maximun}.

Using Lemma \ref{Lema 3.1} we see that for each $\Omega \in M^{\ell}_N$,
\begin{align}\label{xx}
\sum_{j=1}^{m} \|P_{V_{\Omega}} f_j\|^2&= \sum_{j=1}^{m} \|P_{\widehat{V_{\Omega}}} \widehat{f_j}\|^2\\
\nonumber&= \sum_{j=1}^{m} \int_{\U}  \|P_{J_{V_{\Omega}}}(\w)( \tau f_j(\w))\|^2_{\ell^2(\Z^d)}\, d\w\\
\nonumber&= \int_{\U} \sum_{j=1}^{m}  \|P_{J_{V_{\Omega}}}(\w) (\tau f_j(\w))\|^2_{\ell^2(\Z^d)}\, d\w.
\end{align}

Recall that $P_{J_{V_{\Omega}}}(\w) $ denotes the orthogonal projection onto the closed subspace ${J_{V_{\Omega}}}(\w)$ of $\ell^2(\Z^d).$

Furthermore, if $\Omega \in M^{\ell}_N,$ we know from the proof of  Proposition \ref{propPW} that $\dim(J_{V_{\Omega}}(\w))=
\ell$  for a.e. $\w\in \U .$ Note that $J_{V_{\Omega}}(\w)$ agrees with the subspace of $\ell^2(\Z^d)$ of the sequences supported 
in $O_{\w}.$
Then there exists a unique set of $\ell$ integer vectors ${\bf{k}}^{\Omega}(\omega)=\{k_1^{\Omega}(\w),\dots, k_{\ell}^{\Omega}(\w)\}\subset \Z^d$ such that  $\mbox{span}\{\delta_{k_j^{\Omega}(\w)}: j=1,\dots,\ell\}= J_{V_{\Omega}}(\w),$
for a.e. $\w\in \U$. Here $\delta_j$ denotes the canonical vector in $\ell^2(\Z^d).$
i.e. $\delta_j(s) = 0$ if $s \neq j$ and $1$ otherwise.
Note that , since $\Omega \subset C_N$ necessarely $\|k_j^{\Omega}(\w)\|_{\infty} \leq N,$ for each $j$ and $\omega.$
Combining this observation with \eqref{xx} we obtain,
\begin{equation}\label{xxx}
\sum_{j=1}^{m} \|P_{V_{\Omega}} f_j\|^2=  \int_{\U} \sum_{j=1}^{m} \sum_{s=1}^{\ell}  |\widehat{f_j}(\w+k_s^{\Omega}(\w))|^2\, d\w.
\end{equation}

So, now we need to maximize the left hand side in \eqref{xxx} over all the sets $\Omega \in M^{\ell}_N.$

Note that given  $\Omega \in M^{\ell}_N,$ for almost each $\omega \in \U,$ the set $\Omega$ contains exactly $\ell$ elements
from the sequence $\{\omega + k, k \in \Z^d\}.$
Then we can pick for each $\omega \in \U$ (up to a set of zero measure)  $\ell$ translations $k_s^*(\omega)$ 
such that  $\sum_{j=1}^{m}  \sum_{s=1}^{\ell}  |\widehat{f_j}(\w+k_s^*(\w))|^2$ is maximum over all
sets of $\ell$ translations ${\bf{k}}=\{k_1,\dots, k_{\ell}\}\subset \Z^d,$ with $\|k_j\|_{\infty} \leq N.$ 
The maximum exists since the fibers of $f_j$ are $\ell^2(\Z^d)$-sequences and the number of translations considered 
is finite.

Call $\mathcal{K}$ the set of admisibles translations i.e. $\mathcal{K} = \{{\bf{k}}=\{k_1,\dots, k_{\ell}\}\subset \Z^d :\|k_j\|_{\infty} \leq N\}$ and for   ${\bf{k}} \in \mathcal{K}$ set $ H_{\bf{k}} (\w) = \sum_{j=1}^{m}  \sum_{s=1}^{\ell}  |\widehat{f_j}(\w+k_s(\w))|^2.$ 

Our goal is to construct a set  $\Omega$ such that the associated space $V_{\Omega}$ is optimal. So the idea is to construct the optimal set $\Omega^*$
considering for each $\omega \in \U$ the optimal translations $\{\omega + k_s^*(\w) : s=1,\dots,\ell\},$ and then taking the union over almost all $\w \in \U.$

For this we define for each ${\bf{k}} =\{k_1,\dots, k_{\ell}\} \in \mathcal{K}$ the following subset of $\U,$
\[
E_{\bf{k}}=\left\{\w\in \U\colon   H_{\bf{k}} (\w)\geq  H_{\bf{r}} (\w),\;
 \forall \;{\bf{r}}=\{{r_1},\dots,{r_{\ell}}\} \in \mathcal{K}\right\},
\]
i.e., $E_{\bf{k}}$ is the set of $\w\in \U$ for which the maximum is attained for ${\bf{k}}=\{k_1,\dots, k_{\ell}\}.$
Note that $E_{\bf{k}}$ could be the empty set for some ${\bf{k}}=\{k_1,\dots, k_{\ell}\}$ and the sets
$E_{\bf{k}}$ may not be disjoint.

Finally we define our optimal set as,
\[
\displaystyle \Omega^*= \bigcup_{\begin{smallmatrix}
  {\bf{k}} \in \mathcal{K}
 \end{smallmatrix}} \bigcup_{j=1}^{\ell} E_{\bf{k}} +k_j.
\]

We will now prove  that $\Omega^*$ is measurable.  First we note that $E_{\bf{k}}$ is a measurable set for each ${\bf{k}} \in \mathcal{K}$ since,
\[
\displaystyle E_{\bf{k}}= \bigcap_{{\bf{r}}\in \mathcal{K}} F_{{\bf{r}}}^{\bf{k}},
\]
where,
\[
F_{{\bf{r}}}^{\bf{k}}=\left\{\w\in \U\colon   H_{\bf{k}} (\w)\geq  H_{\bf{r}} (\w)
\right\}.
\]
Now, since $F_{\bf{r}}^{\bf{k}}$ is  measurable  for all ${\bf{r}}\in \mathcal{K},$ we obtain that $E_{\bf{k}}$ is  measurable and so is  $\Omega^*$.

Furthermore, by construction, $\Omega^*$ is in $M_N^{\ell}.$ Since for all $\Omega \in M_N^{\ell}$ we have that,
$$
 \sum_{j=1}^{m} \sum_{s=1}^{\ell}  |\widehat{f_j}(\w+{k_s^{\Omega}}(\w))|^2
\leq \sum_{j=1}^{m} \sum_{s=1}^{\ell}  |\widehat{f_j}(\w+k_s^*(\w))|^2 \;\;{\text{for almost all }} \w \in \U,
$$
 taking the integral over $\U$ we get
\begin{align*}
\sum_{j=1}^{m} \|P_{V_{\Omega}} f_j\|^2&=  \int_{\U} \sum_{j=1}^{m} \sum_{s=1}^{\ell}  |\widehat{f_j}(\w+k_s^{\Omega}(\w))|^2\, d\w \\ 
&\leq \int_{\U} \sum_{j=1}^{m} \sum_{s=1}^{\ell}  |\widehat{f_j}(\w+k_s^*(\w))|^2\, d\w=\sum_{j=1}^{m} \|P_{V_{\Omega^*}} f_j\|^2 .
\end{align*}

This shows that $\Omega^*\in M_N^{\ell}$  is optimal  over all $\Omega\in M_N^{\ell}.$ 
We conclude 
that $V_{\Omega^*}\in\T^{\ell}_N$ is a solution for the data $\F$.  
\end{proof}

\begin{remark}
Notice that if $\Omega_N^*$ is the optimal multi-tile set for the class $\T^{\ell}_N$ for some data $\mathcal{F}$,
then the approximation error is given by 
\[
\mathcal{E}_N(\F,\ell) = \int_{\R^d \setminus \Omega_N^*}   \sum_{j=1}^{m}  |\widehat{f_j}(\w)|^2 d\w =
 \int_{C_N\setminus \Omega_N^*}   \sum_{j=1}^{m}  |\widehat{f_j}(\w)|^2 d\w + 
  \int_{\R^d\setminus C_N}   \sum_{j=1}^{m}  |\widehat{f_j}(\w)|^2 d\w .
\]
Clearly $\mathcal{E}_N(\F,\ell)\geq \mathcal{E}_{N+1}(\F,\ell)$. 
So  $\mathcal{E}(\F,\ell):=\lim_{N\rightarrow \infty} \mathcal{E}_N(\F,\ell)$ is somehow the optimal error.
Since $\F\subset L^2(\R^d)$ then the second integral goes to zero when $N$  goes to infinite,
for functions with good decay at infinite we will be close to the optimal error for conveniently large $N.$
\end{remark}
\begin{remark}
In Proposition \ref{propPW} we show, for an element of $\T^{\ell}$, how to construct a set of generators that gives a Riesz basis of translates in $\Z^d.$  There are many ways to construct other sets of generators that gives Riesz basis of translates. 
Recently Grepstad-Lev in \cite{GL14} constructed a basis of exponentials  for $L^2(\Omega)$ when $\Omega \subset \R^d$ is  a multi-tile.
Later on, Kolountzakis \cite{Ko13} gave a simpler proof of this result in a slightly more general form.
Precisely they prove the following result.
\begin{theorem}[Theorem 1,\cite{Ko13}]\label{exponenciales}
Suppose $\Omega\subset\R^d$ is bounded, measurable and multi-tiles $\R^d$ when translated by $\Z^d$ at level $\ell.$  Then there exist vectors 
$a_1,\dots, a_{\ell}\in\R^d$ such that the exponentials
\[
e^{-2\pi i(a_j+k)\w} \quad j=1, \dots, \ell, \quad k\in\Z^d
\]
form a Riesz basis for $L^2(\Omega).$
\end{theorem}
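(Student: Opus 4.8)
The plan is to transfer the statement, through the Fourier transform and the fiber formalism of Section~\ref{sis}, to an elementary question about a \emph{finite} family of $\ell\times\ell$ matrices, and then to settle that question by a Vandermonde‑type computation combined with a genericity argument. Throughout, $\U=[-1/2,1/2)^d$ and, for $\w\in\U$, $O_{\w}=\{k\in\Z^d\colon \w+k\in\Omega\}$. Since $\Omega$ multi‑tiles $\R^d$ by $\Z^d$ at level $\ell$ we have $\#O_{\w}=\ell$ for a.e. $\w$, and since $\Omega$ is bounded there is a fixed finite set $\Lambda\subset\Z^d$ with $O_{\w}\subset\Lambda$ for a.e. $\w$ (read fiberwise, this is exactly the decomposition of Lemma~\ref{multi}). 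In particular $O_{\w}$ takes only finitely many values as $\w$ ranges over $\U$; call $S_1,\dots,S_M$ the $\ell$‑element subsets of $\Lambda$ that occur on a set of positive measure.

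First I would set up the correspondence. For $a\in\R^d$ let $\psi_a\in L^2(\R^d)$ be defined by $\widehat{\psi_a}(\w)=e^{-2\pi i\,a\cdot\w}\chi_{\Omega}(\w)$, so that $\widehat{T_k\psi_a}(\w)=e^{-2\pi i\,(a+k)\cdot\w}\chi_{\Omega}(\w)$. Under the Fourier transform the exponential system $\{e^{-2\pi i(a_j+k)\w}\colon k\in\Z^d,\ 1\le j\le\ell\}$ in $L^2(\Omega)$ corresponds exactly to the system of integer translates $\{T_k\psi_{a_j}\colon k\in\Z^d,\ 1\le j\le\ell\}$ in $V_{\Omega}$ (recall that $\widehat{V_{\Omega}}=L^2(\Omega)$). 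Hence, by Lemma~\ref{Lema 3.2}(iv), it suffices to choose $a_1,\dots,a_{\ell}$ so that the fibers $\tau\psi_{a_1}(\w),\dots,\tau\psi_{a_{\ell}}(\w)$ form a Riesz basis of $J_{V_{\Omega}}(\w)$ with Riesz bounds independent of $\w$. By the argument in the proof of Proposition~\ref{propPW}, $J_{V_{\Omega}}(\w)$ is the subspace of $\ell^2(\Z^d)$ of sequences supported on $O_{\w}$, which we identify with $\C^{O_{\w}}\cong\C^{\ell}$; on it, $\tau\psi_a(\w)$ has coordinates $\big(e^{-2\pi i\,a\cdot(\w+n)}\big)_{n\in O_{\w}}=e^{-2\pi i\,a\cdot\w}\big(e^{-2\pi i\,a\cdot n}\big)_{n\in O_{\w}}$, and the unimodular factor $e^{-2\pi i\,a\cdot\w}$ is irrelevant to the Riesz‑basis property. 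Thus $\tau\psi_{a_1}(\w),\dots,\tau\psi_{a_{\ell}}(\w)$ is a Riesz basis of $J_{V_{\Omega}}(\w)$ precisely when the $\ell\times\ell$ matrix $A_{S}(a):=\big(e^{-2\pi i\,a_j\cdot n}\big)_{n\in S,\,1\le j\le\ell}$, with $S=O_{\w}$, is invertible; and since the entries of $A_S$ have modulus $1$ (so $\|A_S\|\le\ell$) and $S$ ranges over the finite list $S_1,\dots,S_M$, invertibility of all the $A_{S_r}(a)$ at a single $a=(a_1,\dots,a_{\ell})$ automatically yields \emph{uniform} Riesz bounds. So the theorem reduces to finding $a\in(\R^d)^{\ell}$ with $\det A_{S_r}(a)\neq0$ for all $r=1,\dots,M$.

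To produce such an $a$, I would argue by genericity. Fix one $\ell$‑element set $S=\{m_1,\dots,m_{\ell}\}\subset\Z^d$; the function $a\mapsto\det A_S(a)$ is a finite trigonometric polynomial in the real coordinates of $a$, hence real‑analytic on $(\R^d)^{\ell}$, so once we know it is not identically zero its zero set has Lebesgue measure zero. It is not identically zero: choose $b\in\R^d$ with $b\cdot m_1,\dots,b\cdot m_{\ell}$ pairwise distinct (the excluded $b$ lie in a finite union of hyperplanes), and, after scaling $b$ by a small positive factor, assume these numbers lie in an interval of length less than $1$, so that $z_s:=e^{-2\pi i\,b\cdot m_s}$ are $\ell$ distinct nonzero points of the unit circle; taking $a_j:=jb$ gives $A_S(a)=\big(z_s^{\,j}\big)_{s,j=1}^{\ell}$, whose determinant equals $\big(\prod_{s}z_s\big)\prod_{s<t}(z_t-z_s)\neq0$. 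Consequently each set $\{a\colon\det A_{S_r}(a)=0\}$ has measure zero, their union over the finitely many $r$ has measure zero, and any $a^{*}=(a_1^{*},\dots,a_{\ell}^{*})$ in the complement does the job.

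The only place the boundedness of $\Omega$ is used — and the point I expect to be the real content of the argument — is in guaranteeing that there are only finitely many possible fibers $O_{\w}$: this is precisely what reduces the problem to finitely many determinant conditions and, at the same time, upgrades pointwise invertibility to the uniform Riesz bound required by Lemma~\ref{Lema 3.2}(iv). If $\Omega$ were unbounded the norms $\|A_S(a)^{-1}\|$ could blow up along a sequence of fibers and the system would fail to be a Riesz basis, so any extension beyond bounded $\Omega$ would need an extra quantitative ingredient.
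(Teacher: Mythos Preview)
The paper does not give a proof of this theorem; it is quoted verbatim from \cite{Ko13} (Kolountzakis) and used as a black box in the remark following the main result of Section~\ref{section2}. There is therefore no ``paper's own proof'' to compare against.

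Your argument is correct, and in fact it is essentially Kolountzakis's proof: use boundedness of $\Omega$ to reduce to finitely many $\ell$-element fiber index sets $S_1,\dots,S_M\subset\Z^d$; note that the Riesz-basis condition at each fiber is governed by the fixed $\ell\times\ell$ exponential matrix $A_{S_r}(a)=(e^{-2\pi i a_j\cdot n})$, so that uniform Riesz bounds follow automatically from invertibility of the finitely many $A_{S_r}(a)$; and finally show that each $a\mapsto\det A_{S_r}(a)$ is a nonzero real-analytic function (via the Vandermonde specialisation $a_j=jb$), so a generic $a\in(\R^d)^{\ell}$ avoids all the zero sets. Your closing remark on where boundedness enters is exactly the point Kolountzakis emphasises as well.
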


From  Theorem \ref{exponenciales}, we can obtain immediately  a set of generators for $V_{\Omega}$.
Let $\varphi$ be such that $\widehat{\varphi} = \chi_\Omega.$
If $a_1,\dots, a_{\ell}\in\R^d$ are as in Theorem \ref{exponenciales}, then $V_{\Omega}= S(\varphi_1,\dots,\varphi_{\ell})$ with 
$\varphi_j = t_{a_j}\varphi,\;j=1,\dots,\ell,$ and the translates of $\varphi_1,\dots,\varphi_{\ell}$ form a Riesz basis of $V_{\Omega}.$

In general, all the Riesz basis for $V_{\Omega}$ can be described in the following way:

Let $A = \{a_{js}\}\in [L^2(\U)]^{\ell\times\ell}$ be a measurable matrix, such that $0 < c_1 \leq \lambda(\w) \leq c_2$ for every eigenvalue $\lambda(\w)$
and for almost each $\w \in \U$. Set  $k(\w) = (k_1(\w),\dots, k_{\ell}(\w))$ such that $w+k_s(\w) \in \Omega.$ 
Define $\varphi_j$ such that $\widehat\varphi_j(\w+k_s(w)) = a_{js}(\w)$. Using  the results stated in subsection \ref{sis} it is not difficult to see that
$\varphi_1,\dots,\varphi_{\ell}$ are measurable, $V_{\Omega}= S(\varphi_1,\dots,\varphi_{\ell})$ and the translates of $\varphi_1,\dots,\varphi_{\ell}$ form a Riesz basis of $V_{\Omega}.$

\end{remark}


\section{The discrete case}\label{section0}
%
%
%
%
The optimal subspace $V^*$ in 
Theorem \ref{solucion-PB}  is the closest to the data $\F$
over all subspaces $V$   in the class $\V_M^{\ell}.$ 
It is not difficult to see that almost each fiber space  $J_{V^*}(\w) \subset \ell^2(\Z^d)$ of $V^*$ is the closest
to the fibers of our data, $\tau(\F)(\w)=\{\tau f_1(\w),\dots,\tau f_m(\w)\}$ over a certain class of closed subspaces of 
$\ell^2(\Z^d)$ that we will call $\mathcal{D}_{\mathcal{N}}^{\ell}.$
Clearly this class is determine by the class $\V_M^{\ell}$. Therefore it is interesting to study independently the approximation problem for this discrete case.

Consequently  the Hilbert space we consider   in this section is $\ell^2(\Z^d).$
We define  the class of approximating subspaces in the following way:

Let $\mathcal{N}$ be an arbitrary finite set and  $\{D_{\sigma}\colon \sigma\in\mathcal{N}\}$  a partition of $\Z^d,$ that is, $\Z^d= \bigcup_{\sigma\in\mathcal{N}} D_{\sigma},$ where the union is disjoint. 

For $a \in \ell^2(\Z^d),$ we denote $a^{\sigma} = {\bf{1}_{D_{\sigma}}} a,$
where ${\bf{1_{D_{\sigma}}}}$ denotes the indicator of $D_{\sigma}.$
Given $S\subset \ell^2(\Z^d)$ a closed subspace  we define 
\[
S_{\sigma}=\{a^{\sigma}: a\in S\}, \quad \mbox{ for each } \sigma\in\mathcal{N}.
\] 
We define the class of approximating subspaces by
\begin{equation}\label{classD}
\mathcal{D}_{\mathcal{N}}^{\ell}=\{S\subset\ell^2(\Z^d)\colon S {\text{ is a subspace, }}  \dim(S)\le\ell \mbox{ and } S_{\sigma}\subseteq S,\;\; \forall \sigma\in\mathcal{N}\}.
\end{equation}

Note that $S \in \mathcal{D}_{\mathcal{N}}^{\ell}$  if and only if $ \dim(S)\le\ell$ and $S$ is the orthogonal sum of the subspaces $S_{\sigma}$ i. e.,
\[
S=\oplus_{\sigma\in \mathcal{N}}S_{\sigma}.
\]

For a given set  $\A= \{a_1,\dots, a_m\}\subset \ell^2(\Z^d)$ consider for each, $\sigma\in\mathcal{N},$ the Gramian matrix  $G_{\sigma}\in \C^{m\times m}$  of the data 
$\A_{\sigma} = \{a_1^{\sigma},\dots, a_m^{\sigma}\},$ that is
  $(G_{\sigma})_{k,l}= \langle a_k^{\sigma}, a_l^{\sigma},\rangle,\; k,l=1,\dots,m,$ 
  with eigenvalues $\la_1^{\si}\geq \dots,\geq\la_m^{\si},$ and orthonormal corresponding left eigenvectors $y_1^\si,\dots, y_m^\si.$
  
  Now set $\Lambda=\{\la_j^\si: j=1,\dots,m,\;\si\in\mathcal{N}\}$ and  collect in $\Lambda_{\ell}$ the   $\ell$ first biggest  eigenvalues of
  $\Lambda$ that is if $\la \in \Lambda_{\ell}$ then $\la \geq \mu$ for all $\mu \in \Lambda\setminus \Lambda_{\ell}.$ 

Write $\Lambda_{\ell}=\{\la_1,\dots,\la_{\ell}\}$. 
For each $s=1,\dots, \ell$ we define the sequence $q_s \in \ell^2(\Z^d)$ in the following way:

Since $\la_s= \lambda^{\si_s}_{j_s}$ for some $\si_s \in \mathcal{N}$ and some $j_s=1,\dots,m$, then
$\lambda_s$ is an eigenvalue of $G_{\si_s}$. Let $y_{j_s}^{\si_s}$ be the corresponding left eigenvector
$y_{j_s}^{\si_s}=(y_{j_s}^{\si_s}(1),\dots, y_{j_s}^{\si_s}(m)).$

Then define, if $\la_s \in \Lambda_{\ell}, \la_s \neq 0$ 
\begin{equation}\label{def-q}
q_{s} :=   (\la_s)^{-1/2} \sum_{k=1}^{m} y^{\si_s}_{j_s}(k) a_k^{\si_s}.
\end{equation}

If $\la_s=0$ we define $q_s$ to be the zero sequence.

With this notation we can state the main theorem of this section:

\begin{theorem}\label{PA}
Let $m, \ell\in\N$ and $\mathcal{N}$  a finite set. Assume that a set $\A= \{a_1,\dots, a_m\}\subset \ell^2(\Z^d)$ is given.
 
 Then there exists $S^*\in\mathcal{D}_{\mathcal{N}}^{\ell}$ that satisfies
\begin{equation}\label{eqPA} 
\sum_{j=1}^{m} \|a_j-P_{S^*} a_j\|^2\le \sum_{j=1}^{m} \|a_j-P_{S} a_j\|^2,  \quad \forall \,S\in\mathcal{D}_{\mathcal{N}}^{\ell}.
\end{equation}
Moreover, we have that

\begin{enumerate}
\item[(1)] $S^*=\mbox{span}\{q_1, \cdots, q_{\ell}\}$ where $q_1, \cdots, q_{\ell}$ are defined in \eqref{def-q}.
 Also, the vectors $\{q_1, \cdots, q_{\ell}\}$ form a Parseval frame for $S^*.$
\item[(2)] The error in the approximation is
\[
\mathcal{E}(\A, \mathcal{N}, \ell)= \sum_{\la \in \Lambda\setminus\Lambda_{\ell}}\lambda.
\]
\end{enumerate}
\end{theorem}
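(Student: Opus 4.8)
The plan is to mimic the block-decomposition strategy of Theorem~\ref{solucion-PB}, but now in $\ell^2(\Z^d)$ where the partition $\{D_\sigma\}_{\sigma\in\mathcal{N}}$ plays the role of the sets $\{B_\sigma\}$. First I would observe that since any $S\in\mathcal{D}_\mathcal{N}^\ell$ decomposes orthogonally as $S=\bigoplus_{\sigma\in\mathcal{N}} S_\sigma$, the functional splits: for any $a\in\ell^2(\Z^d)$ we have $P_S a=\sum_{\sigma\in\mathcal{N}} P_{S_\sigma} a^\sigma$, hence $\sum_{j=1}^m\|P_S a_j\|^2=\sum_{\sigma\in\mathcal{N}}\sum_{j=1}^m\|P_{S_\sigma} a_j^\sigma\|^2$. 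So minimizing $\pi(\A,S)$ is equivalent to maximizing $\sum_\sigma\sum_j\|P_{S_\sigma} a_j^\sigma\|^2$ subject to the constraint $\sum_\sigma\dim(S_\sigma)\le\ell$ (with each $S_\sigma$ a subspace of the coordinate subspace $\ell^2(D_\sigma)$, and the $S_\sigma$ automatically orthogonal to each other since the $D_\sigma$ are disjoint).

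Next I would invoke the classical finite-dimensional Eckart--Young / Schmidt result for each block: among all subspaces $W\subseteq\ell^2(D_\sigma)$ of dimension $\le r$, the one maximizing $\sum_{j=1}^m\|P_W a_j^\sigma\|^2$ is the span of the top $r$ left eigenvectors of $G_\sigma$, and the maximum value equals $\sum_{i=1}^r\lambda_i^\sigma$. (This is exactly the content behind Theorem~\ref{Teorema-original} restricted to a single fiber; here it is literally finite-dimensional linear algebra applied to the $m$ vectors $a_j^\sigma$.) Therefore the global optimization reduces to: choose nonnegative integers $r_\sigma$ with $\sum_\sigma r_\sigma\le\ell$ to maximize $\sum_\sigma\sum_{i=1}^{r_\sigma}\lambda_i^\sigma$. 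Since all $\lambda_i^\sigma\ge 0$, this is a greedy selection: the optimum is achieved by picking the $\ell$ largest values from the multiset $\Lambda=\{\lambda_j^\sigma\}$, i.e. by taking $\Lambda_\ell$. The corresponding $r_\sigma$ is the number of elements of $\Lambda_\ell$ lying in block $\sigma$, and because the eigenvalues within each block are already ordered, selecting the top $r_\sigma$ of them is consistent with a greedy choice — here I should note the minor subtlety that ties (equal eigenvalues straddling the cutoff) may make $\Lambda_\ell$ non-unique, but any valid choice yields the same optimal value, which is all that is needed.

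With the optimal dimensions fixed, I would then define $S^*=\bigoplus_\sigma W_\sigma$ where $W_\sigma$ is spanned by the eigenvectors of $G_\sigma$ corresponding to the selected eigenvalues, and exhibit the explicit Parseval frame. The formula $q_s=(\lambda_s)^{-1/2}\sum_{k=1}^m y_{j_s}^{\si_s}(k)\,a_k^{\si_s}$ is the standard way to turn an eigenvector of the Gramian into a normalized element of the data span: a direct computation gives $\langle q_s,q_t\rangle=\delta_{st}$ for the nonzero eigenvalues within a block (using $G_\sigma y_{j}^\sigma=\lambda_j^\sigma y_j^\sigma$ and orthonormality of the $y_j^\sigma$), and vectors from different blocks are orthogonal because their supports $D_{\si_s}, D_{\si_t}$ are disjoint; so $\{q_1,\dots,q_\ell\}$ is in fact an orthonormal set (hence a Parseval frame, trivially). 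That $S^*=\mathrm{span}\{q_1,\dots,q_\ell\}$ follows since each $q_s$ spans, together with the others in its block, the same space as the chosen eigenvectors map to under the synthesis operator of $\A_\sigma$. Finally the error: $\sum_j\|a_j-P_{S^*}a_j\|^2=\sum_j\|a_j\|^2-\sum_j\|P_{S^*}a_j\|^2=\sum_\sigma\sum_j\|a_j^\sigma\|^2-\sum_{\lambda\in\Lambda_\ell}\lambda=\sum_\sigma\mathrm{tr}(G_\sigma)-\sum_{\lambda\in\Lambda_\ell}\lambda=\sum_{\lambda\in\Lambda}\lambda-\sum_{\lambda\in\Lambda_\ell}\lambda=\sum_{\lambda\in\Lambda\setminus\Lambda_\ell}\lambda$, using $\sum_j\|a_j^\sigma\|^2=\mathrm{tr}(G_\sigma)=\sum_j\lambda_j^\sigma$.

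\textbf{Main obstacle.} I expect the only real work to be the reduction step: carefully justifying that the orthogonal decomposition $S=\bigoplus_\sigma S_\sigma$ is forced by the defining condition $S_\sigma\subseteq S$ (the remark after \eqref{classD} asserts this, so it can be cited), and then verifying cleanly that the greedy eigenvalue selection across blocks actually realizes the global maximum — this needs the nonnegativity of the Gramian eigenvalues and an exchange argument, but is elementary. Everything else (the per-block Eckart--Young statement, the Parseval/orthonormality check for the $q_s$, and the trace computation for the error) is routine finite-dimensional linear algebra and should not present difficulty; in particular, unlike Theorem~\ref{solucion-PB} no measurability machinery is required since $\mathcal{N}$ is finite and we work directly in $\ell^2(\Z^d)$.
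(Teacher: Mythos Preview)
Your proposal is correct and follows essentially the same approach as the paper: decompose $S$ orthogonally over the blocks, apply the per-block Eckart--Young/Schmidt result (which is what the paper cites as Theorem~4.1 of \cite{ACHM07}), and then optimize the dimension allocation $(r_\sigma)_\sigma$ greedily by selecting the $\ell$ largest eigenvalues from $\Lambda$. One small wording caveat: when some selected $\lambda_s=0$ the corresponding $q_s$ is the zero vector, so $\{q_1,\dots,q_\ell\}$ is a Parseval frame but not literally an orthonormal set; you clearly have this in mind, but the phrasing should be tightened.
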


\begin{proof}
First, we observe that \eqref{eqPA} is equivalently to,
\[
\sum_{j=1}^{m} \|P_{S^*} a_j\|^2\ge \sum_{j=1}^{m} \|P_{S} a_j\|^2, \quad \forall\, S\in\mathcal{D}_{\mathcal{N}}^{\ell}.
\]

Furthermore, if $S\in\mathcal{D}_{\mathcal{N}}^{\ell}$ then

\begin{equation*}
\sum_{j=1}^{m} \|P_{S} a_j\|^2= \sum_{j=1}^{m} \|\sum_{\sigma\in\mathcal{N}}P_{S_{\sigma}} a_j\|^2
= \sum_{j=1}^{m} \|\sum_{\sigma\in\mathcal{N}}P_{S_{\sigma}} a_j^{\sigma}\|^2
= \sum_{\sigma\in\mathcal{N}} \sum_{j=1}^{m}\|P_{S_{\sigma}} a_j^{\sigma}\|^2, 
\end{equation*}
where $a_j= \sum_{\sigma\in\mathcal{N}} a_j^{\sigma}.$ 

In order to construct an optimal subspace $S^*\in\mathcal{D}_{\mathcal{N}}^{\ell}$ for the data $\A$,   we will find an optimal subspace  $S_{\sigma}$ for each $\sigma\in\mathcal{N}$, of dimension at most $\alpha_{\si}\in \{1,\dots,\ell\}$
for the data $\A_{\sigma} = \{a_1^{\sigma},\dots, a_m^{\sigma}\}$.
The existence of the optimal subspaces are provided by Theorem 4.1 of \cite{ACHM07}.
We need $\sum_{\sigma\in\mathcal{N}} {\text{dim}}(S_{\si}) = \sum_{\sigma\in\mathcal{N}} \alpha_{\sigma} = $dim$(S^*) \leq \ell$.

Thus  if $\mathcal{Q}=\{\alpha=\{\alpha_{\sigma}\} : 0\le \alpha_{\sigma}\le \ell {\text{ and }}  \sum_{\sigma\in\mathcal{N}} \alpha_{\sigma}\le \ell \},$ then for each choice of $\alpha \in\mathcal{Q}$ 
we will find optimal subspaces $\{S_{\sigma}^{\alpha}: \sigma\in\mathcal{N}\}$ and define 
$S^{\alpha}=\oplus_{\sigma\in\mathcal{N}} S_{\sigma}^{\alpha}.$

The candidate for $S^*$ is the space $S^{\alpha}$ which minimize the expression \eqref{eqPA} over all $\alpha \in \mathcal{Q}.$
Let $\beta \in \mathcal{Q}$ be the minimizer. Hence $\beta$ satisfies, 

\begin{equation}\label{max}
 \sum_{\sigma\in\mathcal{N}} \sum_{j=1}^{m}\|P_{S_{\sigma}^{\alpha}} a_j^{\sigma}\|^2 \leq
  \sum_{\sigma\in\mathcal{N}} \sum_{j=1}^{m}\|P_{S_{\sigma}^{\beta}} a_j^{\sigma}\|^2 \quad\forall \; \alpha \in \mathcal{Q}.
\end{equation}

Therefore, the subspace $S^*:= S^{\beta}=\oplus_{\sigma\in\mathcal{N}} S_{\sigma}^{\beta}$  is the optimal subspace
we need.
It is straightforward to see that $S^*\in\mathcal{D}_{\mathcal{N}}^{\ell}$  and that $S^*$ is optimal.

Using Theorem 4.1 of \cite{ACHM07} we obtain that for each $\alpha \in \mathcal{Q}$, the error of approximation for the data
$\A_{\sigma}$ and the class of subspaces of dimension at most $\alpha_{\si}$ is given by
$$
\mathcal{E}(\A_{\sigma},\alpha_{\si})=\sum_{s=\alpha_{\sigma}+1}^m \lambda_s^{\sigma}.
$$
So the distance between the $\alpha$-optimal  subspace $S^{\alpha}$ and the data $\A$ is,
\begin{equation}\label{err}
E(\alpha) = \sum_{\sigma\in\mathcal{N}} \mathcal{E}(\A_{\sigma},\alpha_{\si}) =\sum_{\sigma\in\mathcal{N}} \sum_{s=\alpha_{\sigma}+1}^{m} \lambda_s^{\sigma}.
\end{equation}

Let $\kappa$ be the number of elements in $\mathcal{N}$. We see that $E(\alpha)$ is minimum when the  $m\kappa-\ell$ eigenvalues used in  \eqref{err} are  the smallest
from the set $\Lambda=\{\la_j^\si: j=1,\dots,m,\;\si\in\mathcal{N}\}.$
Therefore if we set $\Lambda_{\ell}\subset \Lambda$ the set of the $\ell$ biggest eigenvalues from $\Lambda$,
 the optimal  $\beta=\{\beta_{\si}\} \in \mathcal{Q}$ satisfies that
 $$
\bigcup_{\si\in\mathcal{N}}\{\la^{\si}_1,\dots,\la^{\si}_{\beta_{\si}}\}=\Lambda_{\ell}.
$$
Therefore,
\[
\mathcal{E}(\A,\mathcal{N}, \ell) = \sum_{\sigma\in\mathcal{N}}\mathcal{E}(\A_{\sigma}, \beta_{\sigma})
= \sum_{\sigma\in\mathcal{N}} \sum_{j=1}^{m}\|a_j^{\sigma} - P_{S_{\sigma}^{\beta}} a_j^{\sigma}\|^2
= \sum_{\sigma\in\mathcal{N}} \sum_{s=\beta_{\sigma}+1}^{m} \lambda_s^{\sigma}= \sum_{\la \in \Lambda\setminus\Lambda_{\ell}}\lambda.
\]

\medskip

In order to construct the generators of $S^{\beta}$ it is enough to construct the generators of each $S^{\beta}_{\si}$.
Since  $S^{\beta}_{\si}$ are optimal subspaces for the data $\mathcal{A}_{\si} $
according with Theorem 4.1 of \cite{ACHM07} the generators of $S^{\beta}_{\si}$ are given by \eqref{def-q}. 
That is the set $\{q_s:  \si_s=\si\}$ is a Parseval frame of $S^{\beta}_{\si}$.

Since the subspaces $S^{\beta}_{\si}$ are mutually orthogonal,
$\{q_1,\dots,q_{\ell}\}$ is a set of Parseval frame generators for the optimal space $S^*=S^{\beta}.$ 
\end{proof}

\begin{remark}
As explained at the beginning of this section there is a  reason to consider this particular class of subspaces for the the discrete case. 
If a SIS V is $M$ extra-invariant for some proper subgroup $M$ of $\R^d$ containing $\Z^d$, 
then its fiber spaces $J_V(\omega)$ satisfies exactly the conditions that we imposed on the class 
$\mathcal{D}_{\mathcal{N}}^{\ell}$ where the partition of $\Z^d$ is $\{B_{\sigma}\cap \Z^d: \sigma \in \mathcal{N}\} $ and $B_{\sigma}$ and $\mathcal{N}$ are as  in \eqref{def-Bsigma}.
So,  the discrete result (Theorem \ref{PA}) provides a different proof of Theorem  \ref{solucion-PB}
using properties of range functions (\ref{U}),
without the need of Theorem \ref{Teorema-original}. Actually this proof  includes Theorem \ref{Teorema-original}. 
\end{remark}

\medskip

{\bf Acknowledgements.} We thank Ursula Molter and Victoria Paternostro for carefully reading the manuscript.



%

\end{document}